\newtheorem{thm}{Theorem}
\newtheorem{cor}{Corollary}
\newtheorem{lem}{Lemma}
\newtheorem{prop}{Proposition}
\newtheorem{example}{Example}
\newtheorem{remarks}{Remark}
\newtheorem{defn}{Definition}
\newtheorem{hyp}{Hypothesis}
\numberwithin{equation}{section}
\date{}
\def\vep{\varepsilon}
\def\<{\langle}
\def\>{\rangle}
\def\d"{^{\prime\prime}}
\def\bhyp{\begin{hyp}}
\def\nhyp{\end{hyp}}
\def\bdef{\begin{defn}}
\def\ndef{\end{defn}}
\def\bthm{\begin{thm}}
\def\nthm{\end{thm}}
\def\bprop{\begin{prop}}
\def\nprop{\end{prop}}
\def\brmk{\begin{remarks}}
\def\nrmk{\end{remarks}}
\def\bexa{\begin{example}}
\def\nexa{\end{example}}
\def\blem{\begin{lem}}
\def\nlem{\end{lem}}
\def\bcor{\begin{cor}}
\def\ncor{\end{cor}}
\def\bexe{\begin{exe}}
\def\nexe{\end{exe}}
\def\bprf{\begin{proof}}
\def\nprf{\end{proof}}
\def\bdes{\begin{description}}
\def\ndes{\end{description}}
\def\benu{\begin{enumerate}}
\def\nenu{\end{enumerate}}
\def\ess{\text{\rm{ess}}}
\begin{document}

 \title[Large deviations and quasi-ergodic distribution]
 {Large deviations of the empirical measures of a strong-Feller Markov process inside a subset and  quasi-ergodic distribution}

\author[A. Guillin]{\textbf{\quad {Arnaud} Guillin$^{\dag}$    }}
\address{{\bf Arnaud Guillin}. Universit\'e Clermont Auvergne, CNRS, LMBP, F-63000 CLERMONT-FERRAND, FRANCE}
 \email{arnaud.guillin@uca.fr}

\author[B. Nectoux]{\textbf{\quad Boris Nectoux$^{\dag}$  }}
\address{{\bf Boris Nectoux}.Universit\'e Clermont Auvergne, CNRS, LMBP, F-63000 CLERMONT-FERRAND, FRANCE}
 \email{boris.nectoux@uca.fr}

\author[L. Wu]{\textbf{\quad Liming Wu$^{\dag}$ \, \, }}
\address{{\bf Liming Wu}. Universit\'e Clermont Auvergne, CNRS, LMBP, F-63000  CLERMONT-FERRAND, FRANCE, and,
Institute for Advanced Study in Mathematics, Harbin Institute of Technology, Harbin 150001, China}
\email{Li-Ming.Wu@uca.fr}

\begin{abstract}
In this work, we  establish, for a strong Feller process, the large deviation principle  for  the occupation measure  conditioned not to exit a given subregion.  The rate function vanishes only at a unique measure, which is  the so-called   quasi-ergodic distribution of the process in this subregion.
In addition, we show that the rate function is the Dirichlet form in the particular case when the process is reversible.
 We apply our results to several stochastic processes such as the solutions of  elliptic stochastic differential equations driven by a rotationally invariant $\alpha$-stable process,   the kinetic Langevin process,  and the overdamped Langevin process driven by a Brownian motion.  \\

 \medskip

\centerline {\it Dedicated to Patrick Cattiaux}
\end{abstract}
\maketitle

\vskip 20pt\noindent {\it AMS 2020 Subject classifications.}

\vskip 20pt\noindent {\it Key words and Phrases.}  Large deviation principle, quasi-stationary distribution, quasi-ergodic distribution, killed Markov process, stable and kinetic processes.

\section{Introduction}

\subsection{Setting and purpose of this work}
\begin{sloppypar}
Let $(X_t)_{t\ge   0}$ be a {\it c\`adl\`ag} Markov process valued in a Polish space $\mathscr S$, defined on the filtered probability space
$(\Omega,\mathcal F, (\mathcal F_t)_{t\ge   0}, (\mathbb P_x)_{x\in E})$, where $\mathbb P_x$ means that $\mathbb P_x(X_0=x)=1$ for each $x\in \mathscr  S$.
Given a nonempty  open subset  $\mathscr   D$ of  $\mathscr S$, consider the first exiting time of the process from   $\mathscr   D$:
\begin{equation}\label{exit-time}
\sigma_{\mathscr  D }:=\inf\{t\ge 0, X_t\in \mathscr  D ^c\}.
\end{equation}
A natural question  in population  processes~\cite{collet2012quasi,meleard2012quasi,champagnat2021lyapunov} and in metastability in molecular dynamics~\cite{lelievre2016partial,di-gesu-lelievre-le-peutrec-nectoux-17,DLLN,lelievre2022eyring},   is to investigate the long time behavior of the law of the process $(X_t)_{t\ge   0}$  conditioned to stay inside  $\mathscr  D$, i.e.   to study  the quantity
$$
\lim_{t\to +\infty} \mathbb P_\nu[X_t\in \cdot\ | t<\sigma_{\mathscr D}],
$$
 where $\nu$ is a given initial distribution on ${\mathscr D}$ and $\mathbb P_\nu(\cdot)=\int_{\mathscr S} \mathbb P_x(\cdot) \nu(dx)$ (under $\mathbb P_\nu$, the distribution of $X_0$ is $\nu$). Intuitively the limit distribution $\mu_{\mathscr D}$ should satisfy
$$
\mu_{\mathscr D}(\cdot) = \mathbb P_{\mu_{\mathscr D}}[X_t\in\cdot\ | t<\sigma_{\mathscr D}],\ \forall t>0.
$$
This is exactly the definition of the {\it quasi-stationary distribution} (q.s.d. in short) of the process in $\mathscr  D$, see e.g.~\cite{collet2012quasi}.
Considering the killed semigroup
\begin{equation}\label{killedsg1}
P_t^{\mathscr D}(x,A):=\mathbb P_x[X_t\in A, t<\sigma_{\mathscr D}], \ \forall A\in\mathcal B(\mathscr S),
\end{equation}
where  $\mathcal B(\mathscr S)$ is the Borel $\sigma$-field of $\mathscr S$,
then $\mu_{\mathscr D}$ is a q.s.d. if and only if
$$
\mu_{\mathscr D} P_t^{\mathscr D} = \lambda(t) \mu_{\mathscr D},\ \lambda(t)=\mathbb P_{\mu_{\mathscr D}}(t<\sigma_{\mathscr D}),
$$
i.e. $\mu_{\mathscr D}$ is a positive left eigen-measure of the killed Dirichlet semigroup $P_t^{\mathscr D}$. 
In our previous work \cite{guillinqsd}, we gave  a quite general framework
for the existence, uniqueness of $\mu_{\mathscr D}$,  as well as for  the exponential convergence of $
\mathbb P_\nu[X_t\in \cdot\ | t<\sigma_{\mathscr D}]
$ to $\mu_{\mathscr D}$ as $t\to +\infty$.
From a statistical point of view, it is also very natural to consider the limit behavior of the conditional distribution $\mathbb P_\nu [L_t\in \cdot\ | t<\sigma_{\mathscr D}]$ of the empirical distributions (or occupation measures)
\begin{equation}\label{empirical-L}
L_t=\frac{1}{t}\int_{0}^{t} \delta_{X_s} dx
\end{equation}
as $t\to+\infty$, where $\delta_x$ is the Dirac measure at the point $x$. Quite curiously, the empirical distribution $L_t$, knowing that $\{t<\sigma_{\mathscr D}\}$, will not converge  to the q.s.d. $\mu_{\mathscr D}$ (unlike in the case where  $\mathscr D=\mathscr S$ if $(X_t, t\ge0)$ is ergodic), but to the so-called \textit{quasi-ergodic distribution} (q.e.d. in short) $\pi_{\mathscr D}$ which is defined by
$$\pi_{\mathscr D}=\varphi \mu_{\mathscr D}$$ where $\varphi$ is the right positive eigenfunction satisfying $P_t^{\mathscr D}\varphi =\lambda(t)\varphi$ and $\mu_{\mathscr D}(\varphi)=\int_{\mathscr S} \varphi d\mu_{\mathscr D}=1$.
 \end{sloppypar}

The purpose of this work is to establish the large deviation principle (L.D.P. in short) of $\mathbb P_\nu[L_t\in \cdot\ | t<\sigma_{\mathscr D}]$ with some rate function $I_{\mathscr D}$ which vanishes only at a unique measure which is the q.e.d. $\pi_{\mathscr D}$. This gives quite precise information about how it approaches to the q.e.d. $\pi_{\mathscr D}$ (for instance exponentially fast in probability, see Corollary \ref{co1}).

We should emphasize that the true history is much more delicate than what said roughly above. Indeed $(\lambda(t), \mu_{\mathscr D}, \varphi)$ is in general not unique: even for the one-dimenional Ornstein-Uhlenbeck process with $\mathscr S=\mathbb R$, $\mathscr D=(0,+\infty)$, the uniqueness of the q.s.d. fails, see \cite{lladser2006domain}. Our work consists to find a rich family of initial distribution $\nu$ (covering at least all Dirac measures $\delta_x, x\in \mathscr D$) so that the intuitive picture above holds. As the rate function $I_{\mathscr D}$ of the L.D.P. of $L_t$ is usually interpreted as some (minus)-entropy functional, the q.e.d. $\pi_{\mathscr D}$ where $I_{\mathscr D}$ vanishes can be understood as the quasi-equilibrium of maximal entropy.

\subsection{Organization of this work}
This work is organized as follows. In Section \ref{sec.11}, we introduce  the conditions we will impose on the process $(X_t,t\ge 0)$ which are mainly those of \cite{guillinqsd} adapted to the killed Feynman-Kac semigroups we consider. We then give in Section \ref{sec.MR} the main result of this work which is    Theorem \ref{thm-main2} about the large deviations of $\mathbb P_\nu[L_t\in \cdot\ | t<\sigma_{\mathscr D}]$ on $\mathcal P(\mathscr D)$ equipped with the $\tau$-topology (stronger than the usual weak convergence topology), see also Theorem \ref{thm-main1} about the spectral gap of the killed Feynman-Kac semigroup. We also provide   the identification of the rate function in the reversible case, see Corollary \ref{cor22}.
Section \ref{sec.Pr} is devoted to the proof of Theorem \ref{thm-main1}, and we prove Theorem \ref{thm-main2} in Section \ref{secPr2}. In Section \ref{sec.Pr3} we prove the identification of the rate function in the reversible case and finally provides examples in Section \ref{pr.Ex}.

\section{Main results}

\subsection{Framework: notations and assumptions}
\label{sec.11}
Let $(X_t,t\ge  0)$ be a time homogeneous Markov process valued in a metric complete separable (say Polish) space $\mathscr S$, with c\`adl\`ag paths and satisfying the strong Markov property, defined on the filtered probability space $(\Omega, \mathcal F, (\mathcal F_t)_{t\ge  0}, (\mathbb P_{x})_{x\in S })$ where $\mathbb P_{x}[X_0= x]=1$ for all $x\in \mathscr S $ (and where the filtration satisfies the usual condition).
Let $\mathcal B(\mathscr S )$ be the Borel $\sigma$-algebra of $\mathscr S $, $b\mathcal B(\mathscr S )$ the Banach space of all bounded and Borel measurable functions $f:\mathscr S \to \mathbb R$ equipped with the sup-norm
 $$\Vert f \Vert=\sup_{ x\in \mathscr S }\vert f( x)\vert.$$
 We also denote by $\mathcal C_b(\mathscr S)$ the space of bounded continuous real-valued functions over $\mathscr S$.
 Given an initial distribution $\nu$ on $\mathscr S $, we write $\mathbb P_\nu(\cdot)=\int_{S } \mathbb P_{x}(\cdot) \nu(d{x}) $.   For $\mathcal A\subset \mathscr S$, we denote by $\mathbf 1_{\mathcal A}$   the indicator function of $\mathcal A$.
The  transition probability semigroup  of $(X_t,t\ge  0)$ is denoted by $(P_t,t\ge  0)$.
 We say that $P_t$ is strong Feller  if  $P_tf$ is continuous on $\mathscr S $ for any $f\in b\mathcal B(\mathscr S )$.
 We denote by $\mathcal P(\mathscr S)$ the space of  probability measures on  $\mathscr S$.
 For any measure $\nu$, transition kernel $P(x,dy)$,  and function $f$ on $(\mathscr S, \mathcal B(\mathscr S))$, we write
 $$\nu(f)=\<\nu, f\>=\int_{\mathscr S} fd\nu,\ Pf(x)=\int_{\mathscr S} f(y)P(x,dy),\  \text{ and } (\nu P)(f)=\nu(Pf).$$
We recall that the space $\mathbb D([0,T],\mathscr S )$ of $\mathscr S $-valued c\`adl\`ag paths   defined on $[0,T]$, equipped with the Skorokhod topology, is a Polish space, see e.g.~\cite{billingsley2013}.

For a continuous time Markov process, often what is given is its generator $\mathfrak L$, not its transition semigroup $(P_t,t\ge  0)$, which is unknown in general. We say that a continuous   function  $f$ belongs to the extended domain $\mathbb D_e(\mathfrak L)$ of $\mathfrak L$, if there is some measurable function $g$ on $\mathscr S $ such that
$\int_0^t |g|(X_s)ds<+\infty$, $\mathbb P_{x}$-a.e. for all ${x}\in \mathscr S $
 and
\begin{equation}\label{eq.de}
 M_t(f):=f(X_t)-f(X_0)- \int_0^t g(X_s)ds
\end{equation}
is a $\mathbb P_{x}$-local martingale for all ${x}\in \mathscr S$. Such a function $g$, denoted by $\mathfrak L f$, is not unique in general. But it is unique up to the equivalence of
{\it quasi-everywhere} (q.e.) that we recall: two functions $g_1,g_2$ are said to be equal q.e., if $g_1=g_2$ almost everywhere in the (resolvent) measure $R_1({x},\cdot)=\int_0^{+\infty} e^{-t}P_t({x},\cdot)dt$ for  every  ${x}\in \mathscr S $.
\medskip

We will work in the following framework, which is slightly different from our previous work \cite{guillinqsd}.  More precisely, we consider the following assumptions on the non-killed process:

\benu
\item[{\bf (C1)}]  For each $t>0$, $P_t$ is strong Feller.

\item[{\bf (C2)}]  For every $T>0$, $x \mapsto\mathbb P_{x}(X_{[0,T]}\in \cdot)$   is continuous from $\mathscr S $ to the space $\mathcal P(\mathbb D([0,T], \mathscr S ))$
equipped with the weak convergence topology.
\nenu


 \benu
\item[{\bf (C3)}]  There exist a continuous  function $\mathbf W : \mathscr S \to [1,+\infty[$, belonging   to the extended domain $\mathbb D_e(\mathfrak L)$,   two sequences of positive constants $(r_n)$ and $(b_n)$ where $r_n\to +\infty$, and an increasing sequence of compact subsets $(K_n)$ of $\mathscr S $ and some constant $p>1$, such that
$$
-\mathfrak L \mathbf W^p ({x}) \ge  r_n \mathbf W^p({x}) - b_n \mathbf 1_{K_n}({x}), \ q.e.
$$
\nenu

Let us now introduce the setting for the killed process. Let $\mathscr D $ be an non empty and open   subset of  $\mathscr S $  and  $\sigma_{\mathscr D}$ be the first exiting time of $\mathscr  D$ defined in  \eqref{exit-time}.
The transition semigroup of the killed process $(X_t, 0\le  t< \sigma_{D })$ is given by \eqref{killedsg1}, or equivalently
for $t\ge  0$ and $ x\in \mathscr D $,
\begin{equation}\label{killedsg}
P_t^{\mathscr D } f( x) = \mathbb E_{x} [\mathbf 1_{t<\sigma_{\mathscr D }} f(X_t)],
\end{equation}
for $f\in b\mathcal B(\mathscr D )$. This semigroup is often called the Dirichlet semigroup.

We now turn   to the conditions on the Dirichlet  semigroup $(P_t^{\mathscr D},t\ge 0)$.

\benu
\item[{\bf (C4)}] For a measure-separable\footnote{Here measure-separability means: if $\mu(f)=\nu(f)$ for all $f\in \mathcal C$, the two positive measures $\mu, \nu$ on $\mathscr D$ are the same.} class $\mathcal C$ of bounded and continuous functions on $\mathscr D$ , $P_t^{\mathscr D}f$ is continuous on $\mathscr D$ for any $f\in \mathcal C$.


\item[{\bf (C5)}]
 There exists $t_0>0$ such that for all $t\ge  t_0$,  for all ${x}\in \mathscr D $ and nonempty open subsets $O$ of $\mathscr D $, $
P^{\mathscr D }_t({x},O)>0$.
Moreover,  there exists $ x_0\in \mathscr D $ such that $\mathbb P_{ x_0}[\sigma_{\mathscr D }<+\infty]>0$.
\nenu

Notice that {\bf (C1),\ (C3)} are slightly stronger than  \cite[{\bf (C1)}, {\bf (C3)}]{guillinqsd}.

\medskip

 \noindent
\textbf{Running Example}.
A prototypical example of reversible dynamics satisfying
  \textbf{(C1)}  $\to$ \textbf{(C5)} is the solution to the so-called overdamped Langevin process
\begin{equation}\label{eq.Lsur}
dy_t= \mathbf c(y_t)dt + dB_t,
\end{equation}
  where $(B_t,t\ge 0)$ is a standard Brownian motion over $\mathbb R^d$.
It can indeed be   checked that
that when  $\mathbf c:\mathbb R^d\to \mathbb R^d$ is  locally Lipschitz   such that
\begin{equation}\label{eq.L1}
\lim_{|x|\to +\infty}\mathbf c(x)\cdot \frac{x}{|x|}= -\infty,
\end{equation}    the conditions  \textbf{(C1)}  $\to$ \textbf{(C5)} are satisfied for the strong solution  $(y_t,t\ge 0)$  of \eqref{eq.Lsur} on any subdomain $\mathscr D$ (i.e. non empty, open and connected) of $\mathbb R^d$ with the
Lyapunov function
$$\mathbf W(x)=e^{a |x|(1-\chi(x))},$$
where  $a>0$ and   $\chi\in \mathcal C^\infty_c(\mathbb R^d)$ equals $1$ in a neighborhood of $0$ in  $\mathbb R^d$. Such a claim can be proved using e.g. the techniques developed in \cite{guillinqsd3}. We refer to Section \ref{pr.Ex} for more complicated examples arising from statistical physics of processes satisfying   \textbf{(C1)}  $\to$ \textbf{(C5)}.

\subsection{Main results}
\label{sec.MR}
In this section, we state the main result of this work, which is Theorem \ref{thm-main2} below. Before, we need  a result  on the spectral gap of  the   killed (outside $\mathscr D$) Feynman-Kac semigroup of $(X_t,t\ge 0)$, which has its own interest. This is the purpose of Theorem \ref{thm-main1} stated in Section \ref{sec.SPg}.  In a nutshell, we need this control on Feynman-Kac semigroup as our approach for large deviations is based on G\"artner-Ellis theorem. It thus relies on a control of a log-Laplace transform which can be recasted in a Feynman-Kac framework.

\subsubsection{Spectral gap of Feynman-Kac semigroup on weighted function spaces}
\label{sec.SPg}

In this section we study  the existence of the spectral gap of the killed Feynman-Kac semigroup of $(X_t,t\ge 0)$ on a weighted space of measures.
In order to introduce our main object of interest, we need to introduce the potential $V$  for which we will suppose throughout the paper without further mention

\smallskip

\benu
\item[{\bf ($\mathbf{H_V}$)}] $V\in b\mathcal B(\mathscr D)$, i.e. $V$ is a bounded and measurable function.
\nenu

\smallskip

The killed (outside $\mathscr D$) Feynman-Kac semigroup is given by
\begin{equation}\label{FKsg}
P_t^{\mathscr D,V} f(x)= \mathbb E_x \Big[ f(X_t)\, e^{\int_{0}^{t} V(X_s) dx}\mathbf 1_{t<\sigma_{\mathscr D}} \Big], f\in b\mathcal B(\mathscr D), t\ge 0, x\in S.
\end{equation}
Note that the  generator of this killed semigroup  is (formally) the Schr\"odinger operator $\mathfrak L_{\mathscr D, V}=\mathfrak L_{\mathscr D} +V$ where $\mathfrak L_{\mathscr D}$ is the generator of $(P_t^{\mathscr D},t\ge 0)$.  Note also that $P_t^{\mathscr D}=P_t^{\mathscr D,0}$.
We refer to the classical textbook~\cite{del2004feynman}  for the theory of  Feynman-Kac semigroups (see also~\cite{nagasawa2012stochastic,del2003particle}).

Under the condition {\bf (C3)}, we will consider $P_t^{\mathscr D,V}$ as bounded operators on the weighted Banach space $b_{\mathbf W}\mathcal B(\mathscr D)$, where $b_{\mathbf W}\mathcal B(\mathscr D)$  is defined by:
\begin{equation}\label{weighted-W}
b_{\mathbf W}\mathcal B(\mathscr D):=\Big\{f:\mathscr D\to \mathbb R \text{ measurable s.t. } \ \|f\|_{\mathbf W}:=\sup_{x\in \mathscr D}\frac{|f(x)|}{{\mathbf W}(x)}<+\infty\Big\}.
\end{equation}
Indeed,  by \cite[Proposition 5.1]{guillinqsd} and {\bf (C3)}, we have using also the fact that ${\mathbf W}\ge   1$:
\begin{align*}
\mathfrak L {\mathbf W}=\mathfrak L ({\mathbf W}^p)^{1/p}\leqslant \frac{1}{p} ({\mathbf W}^p)^{\frac{1}{p} -1} \mathfrak L {\mathbf W}^p&\leqslant \frac{1}{p} {\mathbf W}^{1-p} (-r_n {\mathbf W}^p -b_n \mathbf 1_{K_n}) \\
&\leqslant -r^*_n {\mathbf W} +  b^*_n \mathbf 1_{K_n}
\end{align*}
where $r_n^*= {r_n}/{p}$ and $b_n^*=b_n/p$. Consequently,  $e^{-b_n^* t} {\mathbf W}(X_t)$ is a supermartingale.  Hence, one has that:
$$P_t^{\mathscr D} {\mathbf W} \leqslant e^{b_n^* t}{\mathbf W},$$ and then that:
$$
 P_t^{\mathscr D,V} {\mathbf W}(x) \leqslant  e^{\|V\| t} P_t^{\mathscr D} {\mathbf W}(x)\leqslant   e^{(b_n^*+\|V\| ) t} {\mathbf W}(x).
$$
In conclusion, we have that
$$\|P_t^{\mathscr D,V}\|_{\mathbf W}:=\sup\{\|P_t^{\mathscr D,V} f\|_{\mathbf W}, \ \|f\|_{\mathbf W}\leqslant 1\} \leqslant e^{(b_n^*+\|V\|  )t}.$$
Large deviations of $\mathbb P_\nu[L_t\in\cdot|t<\sigma_{\mathscr D}]$ are closely related to the spectral properties of  the killed Feynman-Kac semigroup $(P_t^{\mathscr D,V},t\ge 0)$, see indeed~\cite{DS89} and~\cite{Wu97}. For this reason, we will need  following result   about the spectral gap of the Feynman-Kac semigroup $P_t^{\mathscr D,V}$ on $b_{\mathbf W}\mathcal B(\mathscr D)$, which generalizes \cite[Theorem 2.2]{guillinqsd}  from the case $V\equiv 0$ to general real-valued bounded $V$.

\bthm\label{thm-main1} Assume  {\rm {\bf (C1)}}$\to$ {\rm {\bf (C5)}} and {\rm {\bf ($\mathbf{H_V}$)}}.
For any given bounded potential $V\in b\mathcal B(\mathscr D)$, consider the log spectral radius of $P^{\mathscr D,V}_1$ on $b_{\mathbf W}\mathcal B(\mathscr D)$:
\begin{equation}\label{sp_radius}
\Lambda_{\mathscr D}(V) =
\lim_{t\to+\infty} \frac{1}{t} \log \|P_t^{\mathscr D,V}\|_{\mathbf W}.
\end{equation}
Then:
\benu
\item[{\rm \textbf 1}.] For any $t>0$,
there is only one probability measure  $\mu_{\mathscr D,V}$ such that $
\mu  P_t^{\mathscr D,V}= c(t) \mu_{\mathscr D,V} $
for some constant $c(t)$ and $\mu_{\mathscr D,V}(\mathbf W)<+\infty$. Moreover, $\forall t>0$, $
c(t)=e^{\Lambda_{\mathscr D}(V) t}$, {$\Lambda_{\mathscr D}(V)<\sup_{\mathscr D} V$},
and $\mu_{\mathscr D,V}$ is independent of $t>0$ and charges all non-empty open subsets $O$ of $\mathscr  D$.

\item[{\rm \textbf 2}.]  There is a unique continuous function $\varphi_{\mathscr D,V}$ bounded by $c\mathbf W$  (for some constant $c>0$) such that  $\mu_{\mathscr D,V}(\varphi_{\mathscr D,V})=1$ and
    \begin{equation}\label{thm-main-aa}
      \ P_t^{\mathscr D,V} \varphi_{\mathscr D,V}= e^{\Lambda_{\mathscr D}(V)t} \varphi_{\mathscr D,V} \text{ on }\mathscr D ,\ \forall t\ge 0.
    \end{equation}
 Moreover, $\varphi_{\mathscr D,V}>0$  everywhere  on $\mathscr D $.

\item[{\rm \textbf 3}.]  There exist $\delta>0$ and $C\ge  1$ such that for all $f\in b_{\mathbf W}\mathcal B(D )$ and  $t>0$:
\begin{equation}\label{thm-maina}
\big |e^{-\Lambda_{\mathscr D}(V)t} P_t^{\mathscr D,V} f - \mu_{\mathscr D,V}(f)\cdot \varphi_{\mathscr D, V} \big | \le C e^{-\delta t} \|f\|_{\mathbf W}\cdot {\mathbf W} \text{ on } \mathscr D.
\end{equation}
\nenu
\nthm

 Note also that Item  \textbf 3  in Theorem \ref{thm-main1} implies  that the  Feynman-Kac operator $P_t^{\mathscr D,V}$ ($t>0$) on $b_{\mathbf W}\mathcal B(D )$ has a spectral gap near its spectral radius $e^{\Lambda_{\mathscr D}(V)t}$ and its spectral projection is the mapping  $f\mapsto \varphi_{\mathscr D,V}\mu_{\mathscr D,V}(f)$, which is one-dimensional. Such a result is of independent interest.  Note that $\mu_{\mathscr D,0}$ is the (unique) q.s.d.   of the Markov process $(X_t,t\ge 0)$  in    
 $$\mathcal P_{\mathbf W}(\mathscr D):=\{\nu \in \mathcal P(\mathscr D), \nu(\mathbf W)<+\infty\}.$$
\medskip

\noindent
\textbf{Remark}. The notion of q.s.d. can also be extended to    killed   renormalized Feynman–Kac semigroup, see e.g.  \cite[Definition 1]{guillinFK}.  Item~\textbf 1  in Theorem \ref{thm-main1}  implies that $ \mu_{\mathscr D,V} $ is the  unique q.s.d.  of the killed (in $\mathscr  D$) renormalized Feynman–Kac semigroup   associated with \eqref{FKsg}   in $\mathcal P_{\mathbf W}(\mathscr D)$. 
\medskip

We will prove Theorem \ref{thm-main1}  in Section \ref{sec.Pr} using  \cite[Theorem 3.5 and Theorem 4.1]{guillinqsd}. To prove the spectral gap of  $P_t^{\mathscr D,V}$ ($t>0$) on $b_{\mathbf W}\mathcal B(D )$ we will use   the   non-compact parameter $\beta_{w}$ which was introduced in \cite{Wu2004}.

\subsubsection{Related literature on long time behavior of Feynman-Kac semigroups} Non-killed Feynman-Kac semigroups have been widely studied in the literature and we refer for instance to~\cite{daubechies1983one,carmona1990relativistic,kulczycki2006intrinsic,chen2000intrinsic,rousset2006control,kaleta2010intrinsic,guneysu2011feynman,chen2015intrinsic,chen2016intrinsic,ascione2024bulk} for the study of these semigroups in $L^p$ spaces, see also~\cite{Yosida,simon2015quantum,chung2001brownian,bogdan-book}.
We also mention \cite{ferre2} for a very recent investigation of the long time behavior of non-killed Feynman-Kac  semigroups and  its numerical approximations, see also \cite{collet2024branching}. Impossible not to refer to \cite{cat1,cat2} for pioneering works in the case $V=0$ using ultracontractivity, which is linked in the reversible case to an adapted version of assumption {\bf (C3)}. We also refer to~\cite{champagnat2017general,chazottes2016sharp,del2023stability} and references therein for general conditions for ergodicity of non conservative Markov semigroups, see also the classical textbook~\cite{MT1993}.
We finally refer to the recent work  \cite{guillinFK} where   we study the basic properties and   the long time behavior of killed Feynman-Kac semigroups of several models,  arising from statistical physics, with very general singular Schr\"odinger potentials. As already mentioned above, the main goal of this work is to derive a L.D.P. for $\mathbb P_\nu[L_t\in \cdot\ | t<\sigma_{\mathscr D}]$, this is the purpose of the next section.

\subsubsection{Large deviations}

The space $\mathcal P(\mathscr S)$ of probability measures on $\mathscr S$   equipped  with the weak convergence topology   is a Polish space, whose Borel $\sigma$-field is denoted by $\mathcal B(\mathcal P(\mathscr S))$. We say that a subset $B$ of $\mathcal P(\mathscr S)$ is measurable if $B\in \mathcal B(\mathcal P(\mathscr S))$. The weak convergence topology is written
$$\beta_n\xrightarrow{w}\beta.$$
Notice that for any bounded measurable $V:\mathscr S\to \mathbb{R}$, the functional $\beta \in \mathcal P(\mathscr S) \to \beta(V)$   is  $\mathcal B(\mathcal P(\mathscr S))$-measurable, by the regularity of probability measures on the Polish space $\mathscr S$.
The empirical distribution $L_t$ given by \eqref{empirical-L} is a random variable valued in $\mathcal P(\mathscr S)$. A probability measure $\beta\in \mathcal P(\mathscr D)$ is identified with the probability measure on $\mathscr S$ with $\beta(\mathscr D^c)=0$ (i.e. $ \mathcal P(\mathscr D) \equiv \{\beta\in \mathcal P(\mathscr S);\ \beta(\mathscr D^c)=0\}=\{\beta\in \mathcal P(\mathscr S);\ \beta(\mathscr D)=1\}$), and a function on $\mathscr D$ is identified with the function $\mathbf 1_{\mathscr D} f$ on $\mathscr S$.
We consider also on $\mathcal P(\mathscr S)$   the  $\tau$-topology $\sigma(\mathcal P(\mathscr S), b\mathcal B(\mathscr S))$, i.e. the weakest topology such that $\beta\mapsto \beta(f)$ is continuous for each $f\in b\mathcal B(\mathscr S)$, which is stronger than the weak convergence topology. The $\tau$-convergence is written
$$\beta_n\xrightarrow{\tau}\beta.$$

The main result of this work is the following.

\bthm\label{thm-main2}  Assume {\rm {\bf (C1)}}$\to$ {\rm {\bf (C5)}}, and let $\nu \in  \mathcal P_{\mathbf W}(\mathscr D)$. Then:

\begin{enumerate}
\item[{\rm \textbf A}.]
Conditioned to be inside  the set $\mathscr  D$,  $\mathbb P_\nu[L_T\in\cdot |T<\sigma_{\mathscr D}]$, as $T$ goes to infinity, satisfies the  L.D.P.  on $\mathcal P(\mathscr D)$ w.r.t.  the  $\tau$-convergence topology, with speed $T$ and with the rate function
\begin{equation}\label{thm-main2a}
I_{\mathscr D}(\beta) = \sup_{V\in b\mathcal B(\mathscr D)}\big\{\beta(V)-(\Lambda_{\mathscr D}(V) -\Lambda_{\mathscr D}(0))\big\}, \ \beta\in \mathcal P(\mathscr D).
\end{equation}
More precisely:
\begin{enumerate}[(i)]
  \item[\textbf a.]  The rate function $I_{\mathscr D}$ is good or inf-compact, i.e. the level set $\{I_{\mathscr D}\leqslant L\}$ of $I_{\mathscr D}$ is compact in $(\mathcal P(\mathscr D), \tau)$ for any constant $L\in \mathbb{R}^+$;
  \item[\textbf b.] For any open measurable subset $\mathbf G$ of  $(\mathcal P(\mathscr S), \tau)$,
  \begin{equation}\label{thm-main2b}
  \liminf_{T\to \infty} \frac{1}{T} \log \mathbb P_{\nu}\big  [L_T\in \mathbf G|T<\sigma_{\mathscr D}\big ]\ge   -\inf_{\beta\in \mathbf G} I_{\mathscr D}(\beta);
  \end{equation}
  \item[\textbf c.] For any closed measurable subset $\mathbf F$ of  $(\mathcal P(\mathscr D), \xrightarrow{\tau})$ ,
  \begin{equation}\label{thm-main2c}
  \limsup_{T\to \infty} \frac{1}{T} \log \mathbb P_{\nu}\big  [L_T\in \mathbf F |T<\sigma_{\mathscr D}\big ]\leqslant -\inf_{\beta\in \mathbf F} I_{\mathscr D}(\beta);
  \end{equation}
\end{enumerate}

\item [{\rm \textbf B}.] Furthermore
\begin{equation}\label{thm-main2d}
I_{\mathscr D}(\beta)=0 \Longleftrightarrow \beta=\varphi_{\mathscr D} \mu_{\mathscr D}
\end{equation}
where $\mu_{\mathscr D}=\mu_{\mathscr D,0}$, $\varphi_{\mathscr D}=\varphi_{\mathscr D,0}$ is the right positive eigenfunction of the killed Dirichlet semigroup $P_t^{\mathscr D}$ so that $\mu_{\mathscr D}(\varphi_{\mathscr D})=1$. We call $\pi_{\mathscr D}:= \varphi_{\mathscr D} \mu_{\mathscr D}$ the  q.e.d.   of the process in  $\mathscr  D$.

\end{enumerate}
\nthm

Recall that in the second item in Theorem \ref{thm-main2}, $\mu_{\mathscr D}$ is the q.s.d.   of the Markov process in    $\mathscr D$.
We give in   Section \ref{pr.Ex} several examples of processes arising from statistical physics  satisfying {\rm {\bf (C1)}}$\to$ {\rm {\bf (C5)}}.
We then have the following corollary of Theorem \ref{thm-main2}.

\begin{cor}\label{co1}
Assume {\rm {\bf (C1)}}$\to$ {\rm {\bf (C5)}}, and let $\nu \in  \mathcal P_{\mathbf W}(\mathscr D)$.
 For any measurable $\tau$-neighborhood $\mathcal{N}$ of the q.e.d. $\pi_{\mathscr D}:=\varphi_{\mathscr D} \mu_{\mathscr D}$, there are constants $C, \delta>0$ such that
\begin{equation}
 \mathbb P_{\nu} [L_T\notin \mathcal{N}|T<\sigma_{\mathscr D}] \leqslant C e^{-\delta T},\ \forall T>0.
\end{equation}
\end{cor}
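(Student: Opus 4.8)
The plan is to derive Corollary \ref{co1} directly from the upper bound part \textbf c of Theorem \ref{thm-main2} together with the characterization of the zero set of the rate function in part \textbf B. First I would fix a measurable $\tau$-neighborhood $\mathcal N$ of $\pi_{\mathscr D}$ and set $\mathbf F := \mathcal P(\mathscr D)\setminus \mathcal N$, which is a $\tau$-closed measurable subset of $(\mathcal P(\mathscr D), \xrightarrow{\tau})$. The key observation is that by \eqref{thm-main2d} we have $I_{\mathscr D}(\beta)>0$ for every $\beta\in\mathbf F$ (since $\pi_{\mathscr D}\notin\mathbf F$), and by the goodness of $I_{\mathscr D}$ (item \textbf a) the level sets $\{I_{\mathscr D}\le L\}$ are $\tau$-compact. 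Hence $\mathbf F\cap\{I_{\mathscr D}\le L\}$ is compact for each $L$, so $\inf_{\beta\in\mathbf F} I_{\mathscr D}(\beta)$ is attained; being attained at a point where $I_{\mathscr D}>0$, we conclude $\eta := \inf_{\beta\in\mathbf F} I_{\mathscr D}(\beta)>0$. (If $\mathbf F=\emptyset$ the statement is trivial since the probability is $0$.)

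Next I would apply the large deviations upper bound \eqref{thm-main2c} to the set $\mathbf F$: it gives
\begin{equation*}
\limsup_{T\to\infty}\frac{1}{T}\log \mathbb P_\nu[L_T\in\mathbf F\mid T<\sigma_{\mathscr D}] \le -\eta < 0.
\end{equation*}
Thus there exists $T_0>0$ and $\delta\in(0,\eta)$ such that for all $T\ge T_0$,
\begin{equation*}
\mathbb P_\nu[L_T\notin\mathcal N\mid T<\sigma_{\mathscr D}] = \mathbb P_\nu[L_T\in\mathbf F\mid T<\sigma_{\mathscr D}] \le e^{-\delta T}.
\end{equation*}
To cover the remaining finite range $0<T<T_0$, I note that conditional probabilities are trivially bounded by $1\le e^{\delta T_0}e^{-\delta T}$ for such $T$; absorbing this bounded prefactor into a constant $C := \max\{1, e^{\delta T_0}\}$ gives $\mathbb P_\nu[L_T\notin\mathcal N\mid T<\sigma_{\mathscr D}]\le C e^{-\delta T}$ for all $T>0$, which is the claim.

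The only genuine point requiring care — and the main ``obstacle'', though it is a mild one — is making sure that $\eta>0$ rather than merely $\eta\ge0$; this is precisely where goodness (inf-compactness) of $I_{\mathscr D}$ is used, to pass from ``$I_{\mathscr D}>0$ pointwise on $\mathbf F$'' to ``$\inf_{\mathbf F} I_{\mathscr D}>0$''. One also needs $\mathbf F$ to be measurable so that the conditional probability and the upper bound \eqref{thm-main2c} apply: this follows because $\mathcal N$ is assumed measurable and $\mathcal P(\mathscr D)$ is a measurable subset of $\mathcal P(\mathscr S)$ (it equals $\{\beta:\beta(\mathscr D)=1\}$, a measurable set by regularity of measures on the Polish space $\mathscr S$, as already noted in the text). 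No other ingredient beyond Theorem \ref{thm-main2} is needed.
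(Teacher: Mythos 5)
Your argument is correct and is precisely the intended deduction — the paper states Corollary \ref{co1} as an immediate consequence of Theorem \ref{thm-main2} without writing out the details, and what you have written is exactly those details. The one step worth flagging as implicit in your write-up is the passage from ``the level sets of $I_{\mathscr D}$ are $\tau$-compact'' to ``$\inf_{\mathbf F} I_{\mathscr D}$ is attained'': this uses that inf-compactness implies lower semicontinuity (compact level sets are in particular closed in the Hausdorff space $(\mathcal P(\mathscr D),\tau)$), and then one minimizes the lower semicontinuous $I_{\mathscr D}$ over the nonempty compact set $\mathbf F\cap\{I_{\mathscr D}\le L_0\}$ for some finite $L_0$; you state the conclusion correctly but the semicontinuity link deserves a word. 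Otherwise the reduction to $\mathbf F=\mathcal P(\mathscr D)\setminus\mathcal N$, the use of part \textbf B to get strict positivity of $I_{\mathscr D}$ off $\pi_{\mathscr D}$, the application of the upper bound \eqref{thm-main2c}, and the absorption of the finite time window into the constant $C$ are all exactly right.
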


In other words, it is a conditional law of large numbers: conditioned   not to leave $\mathscr D$  before time $T$ (i.e. knowing the event $\{T<\sigma_{\mathscr D}\}$), the empirical distribution $L_T$ converges to  $\pi_{\mathscr D}$ (in the $\tau$-topology) exponentially fast in probability as $T\to +\infty$.

\subsubsection{Donsker-Varadhan entropy functional}
We conjecture that the rate function $I_{\mathscr D}(\beta)$ in the L.D.P. above should be
\begin{equation}\label{eq2.16}
I_{\mathscr D}(\beta) = J(\beta) - \inf_{\beta \in \mathcal P(\mathscr S): \ \beta(\mathscr D)=1} J(\beta)
\end{equation}
for all $\beta\in \mathcal P(\mathscr D)$,
where $J(\beta)$ is the Donsker-Varadhan entropy functional  given by
\begin{equation}\label{J_DV}
J(\beta)=\inf_{\mathbb Q} H_{\mathcal F_{[0,1]}}(\mathbb Q|\mathbb P_{\beta}),\ \forall \beta\in \mathcal P(\mathscr S).
\end{equation}
Here the infimum is taken over all probability distributions $\mathbb Q$ on the path space $\mathbb D(\mathbb R_+, \mathscr S)$ so that $X_t(\omega)=\omega(t), \ \omega=(t\mapsto\omega(t))\in \mathbb D(\mathbb R_+, \mathscr S)$ is a stationary stochastic processes with the marginal distribution $\mathbb Q_0(\cdot)=\mathbb Q(X_0\in\cdot)=\beta$ (given); and $\mathcal F_{[0,1]}=\sigma(\omega(t);\ t\in [0,1])$; and for the sub $\sigma$-field $\mathcal G$, $\mathbb P|_{\mathcal G}$ is the restriction on $\mathcal G$ of probability measure $\mathbb P$, and
$$
H_\mathcal G(\mathbb Q|\mathbb P):= \begin{cases}
                   \int \log \frac{d\mathbb Q|_{\mathcal G}}{d\mathbb P|_{\mathcal G}} d\mathbb Q, & \mbox{if } \mathbb Q|_{\mathcal G}\ll \mathbb P|_{\mathcal G} \\
                   +\infty, & \mbox{otherwise}
                 \end{cases}
$$
is the usual relative entropy of $\mathbb Q|_{\mathcal G}$ w.r.t. $\mathbb P|_{\mathcal G}$. In other words $H_{\mathcal F_{[0,1]}}(\mathbb Q|\mathbb P_{\beta})$ is  the relative entropy {\it per unit time} of the stationary process law $\mathbb Q$ w.r.t. our Markov process law $\mathbb P_\beta$ with the same initial distribution $\beta$, and it is the rate function for the level-3 (or path level) large deviations of the Markov process. See \cite{DS89} or \cite{Wu2001} for other variational expressions of $J(\beta)$.

In the general setting of Theorem \ref{thm-main2}, we can only prove one half of \eqref{eq2.16}: $I_{\mathscr D}(\beta) \ge   J(\beta) +\Lambda_{\mathscr D}(0)$, see \eqref{Eq4.5}.
However we can prove it in the reversible case, see indeed the next section.

\subsubsection{The reversible case}
In this section we assume in addition that:
\begin{enumerate}
\item [\textbf{(C6)}] The semigroup $(P_t,t\ge 0)$ is symmetric on $L^2(\mathscr S,\pi)$ where $\pi$ is the unique invariant probability measure of our Markov process $(X_t,t\ge 0)$, namely for all $t\ge 0$:
$$
\<P_t f, g\>_\pi=\<f, P_t g\>_\pi:=\int_{\mathscr S} fP_tg d\pi,\ \forall f,g\in L^2(\mathscr S,\pi),
$$
i.e. $(X_t,t\ge 0)$ is reversible under $\mathbb P_\pi$.
 \end{enumerate}

\bcor\label{cor22} Assume {\bf (C1)} to {\bf (C6)}. Assume also  that $(P_t,t\ge 0)$    is  topologically irreducible\footnote{I.e. for some $t_0>0$ and all $t\ge   t_0$,  $P_t(x,O)>0$ for all $x\in \mathscr S$ and all non-empty subsets $O$ of $\mathscr S$.}  on $\mathscr S$.
 Then, the rate function $I_{\mathscr D}$ given by \eqref{thm-main2a} has the following expression: for any $\beta\in \mathcal P(\mathscr D)$,
\begin{equation}\label{cor22a}
I_{\mathscr D}(\beta)  + \lambda_{\mathscr D}= \begin{cases}
            \mathcal E(\sqrt{h},\sqrt{h}), & \mbox{if } \beta=h \pi\ll  \pi \\
            +\infty, & \mbox{otherwise},
          \end{cases}
\end{equation}
 where
$$
\mathcal E(f,f)= \begin{cases}
            \<\sqrt{-\mathfrak L} f, \sqrt{-\mathfrak L} f\>_{\pi}, & \mbox{if $f$ belongs to  {$\mathbb D_{L^2(\mathscr S, \pi)}(\sqrt{-\mathfrak L})$}}, \\
            +\infty, & \mbox{otherwise},
          \end{cases}
$$
is the Dirichlet form {(where $\mathbb D_{L^2(\mathscr S, \pi)}(\sqrt{-\mathfrak L})$  is the domain of $\sqrt{-\mathfrak L}$ in $L^2(\mathscr S, \pi)$)} and
$$
\lambda_{\mathscr D}:=-\Lambda_{\mathscr D}(0)= \inf\Big \{\mathcal E(f,f); \ \int_{\mathscr S} f^2 d\pi=1, f\mathbf 1_{\mathscr D^c}=0, \ \pi\text{-a.e}. \Big \}
$$
is the Dirichlet eigenvalue.
\ncor

Note indeed  the right hand side of \eqref{cor22a} is the closed expression of the Donsker-Varadhan entropy $J(\beta)$ in the reversible case, and $\lambda_{\mathscr D}=\inf_{\{\beta\in \mathcal P(\mathscr S), \beta(\mathscr D)=1\}} J(\beta)$.
\medskip

The large deviation principle for killed Feynman-Kac semigroups of symmetric Markov processes is an open topic\footnote{It could definitely have attracted Patrick Cattiaux, but he was distracted too often by part of the authors to have time to do so.}. It has also been   considered  recently  in  \cite{kim2024quasi}, with additive functional taking into account the jumps of the process, see also \cite{breyer1999quasi,zhang2014quasi,he2019some,kim2016large,chen2020large} and references therein. 
 We also mention \cite{champagnat2019probabilistic} for   Feynman-Kac   representation formulas and   L.D.P.   of solutions to  deterministic models of phenotypic
adaptation in the small mutations
and large time regime.
 \medskip

\noindent
\textbf{Running example}. Assumptions   {\bf (C1)} to {\bf (C6)} are satisfied for the process solution to \eqref{eq.Lsur} when  \eqref{eq.L1} holds and $\mathbf c=-\nabla U$, for some $U:\mathbb R^d\to \mathbb R_+$.
 In this case, the invariant probability measure $\pi$ is the so-called Gibbs measure $e^{-2U(x)}dx/  Z$, where $  Z=\int_{\mathbb R^d} e^{-2U(y)}dy$.

\section{Proof of Theorem \ref{thm-main1}}
\label{sec.Pr}

\begin{proof}[Proof of Theorem \ref{thm-main1}]
We mention that the proof Theorem \ref{thm-main1} is slightly different from the one we performed in~\cite{guillinFK} as here $V$ is not assumed to be continuous.  
Assume  {\rm {\bf (C1)}}$\to$ {\rm {\bf (C5)}} and {\rm {\bf ($\mathbf{H_V}$)}}.
The proof  of Theorem \ref{thm-main1} is divided into several steps.  
\medskip

\noindent
{\bf Step 1.} In this step we prove that the essential spectral radius of $P_t^{\mathscr D,V}$ on $b_{{\mathbf W}}\mathcal B(\mathscr D)$ is $0$.
To this end, we use, as in \cite[Theorem 3.5 and Theorem 4.1]{guillinqsd}, the non-compact parameters $\beta_w$ and $\beta_{\tau}$ (introduced in \cite{Wu2004}) of a positive bounded kernel $Q(x,dy)$   over $ \mathscr S$
\begin{equation}\label{Eq31}
  \beta_{w}(Q):=\inf_{K\subset\subset \mathscr S}\sup_{x\in \mathscr S}Q(x, K^c) \text{ and }
   \beta_{\tau}(Q):=\sup_{(A_n)}\lim_{n\to \infty} \sup_{x\in \mathscr S}Q(x, A_n)
\end{equation}
where $K\subset\subset \mathscr S$ means that $K$ is a compact subset of $\mathscr S$ and where the supremum above is   is taken over all sequences $(A_n)\subset \mathcal B(\mathscr S)$ decreasing
to $\emptyset$.

For any $t>0$ fixed, set
$$Q_0(x,dy):=\frac{{\mathbf W}(y)}{{\mathbf W}(x)} P_t^{\mathscr D}(x,dy) \ \text{ and } \ Q(x,dy)= \frac{{\mathbf W}(y)}{{\mathbf W}(x)} P_t^{\mathscr D,V}(x,dy).$$  By \cite[proof of Theorem 3.5]{guillinqsd},
$$
\beta_{\tau}(\mathbf 1_K Q_0)=0, \ \forall K\subset\subset \mathscr S.
$$
Since $Q\leqslant e^{t\sup_x |V|(x)} Q_0$, we have also
$$
\beta_{\tau} (\mathbf 1_{K} Q) = 0, \ \forall K\subset\subset \mathscr S.
$$
Hence by \cite[Theorem 3.5]{Wu2004}, the essential spectral radius $\mathsf r_{\ess}(Q|_{b\mathcal B})$ of $Q$ on $b\mathcal B$ (see \cite{Wu2004} for definition), is given by
 \begin{equation}\label{Eq32}
 \mathsf r_{\ess}(Q|_{b\mathcal B}) = \inf_{n\ge   1} [\beta_w(Q^n)]^{1/n}.
 \end{equation}
By \cite[Theorem 3.5]{guillinqsd}, $\beta_w(Q_0)=0$ and therefore
$$
\beta_w(Q)\leqslant e^{t \sup_x V(x)} \beta_w(Q_0)=0.
$$
That implies, by \eqref{Eq32},
$$
\mathsf r_{\ess}(P_t^{\mathscr D,V}|_{b_{{\mathbf W}}\mathcal B})= \mathsf r_{\ess}(Q|_{b\mathcal B}) \leqslant \beta_w(Q) =0,
$$
which is the desired result.
\medskip

\noindent
{\bf Step 2.} In this step we prove that  $P_t^{\mathscr D,V}$ is strongly Feller.
To this end, we show that   for any $f\in b\mathcal B(\mathscr D)$, $P_t^{\mathscr D,V}f$ is continuous on $\mathscr D$.
For any $\vep\in (0,t)$ and for any $f\in b\mathcal B(\mathscr D)$, consider for $x\in \mathscr D$,
$$
Q_\vep f(x):= \mathbb E_x \Big[\mathbf 1_{t<\sigma_{\mathscr D}} f(X_t) e^{\int_{\vep}^{t} V(X_s) ds}  \Big] = P_\vep^{\mathscr D} (P_{t-\vep}^{\mathscr D,V}f)(x).
$$
By \cite[Lemma 5.2]{guillinqsd}, $P_\vep^{\mathscr D}$ is strong Feller, and thus, the function   $Q_\vep f = P_\vep^{\mathscr D} (P_{t-\vep}^{\mathscr D,V}f)$ is continuous. Since on the other hand, it holds:
$$
\aligned
&\sup_{x\in \mathscr D} |P_t^{\mathscr D,V} f(x) - Q_{\vep} f(x)|\\
 &=\sup_{x\in \mathscr  D} \Big |\mathbb E_x \Big [ \mathbf 1_{t<\sigma_{\mathscr D}} f(X_t) e^{\int_{\vep}^{t} V(X_s) ds}\big( e^{\int_0^{\vep} V(X_s) ds} -1\big)  \Big] \Big|\\
&\leqslant \left(e^{\vep \|V\|}-1\right) \|f\| e^{t\|V\|}\\
\endaligned
$$
which goes to zero as $\vep\to0^+$. Hence,  we conclude that the function $P_t^{\mathscr D,V}f$ is continuous on $\mathscr D$.

\medskip

\noindent
{\bf Step 3.} We now conclude the proof of Theorem \ref{thm-main1}.  By the generalized Perron-Frobenius type theorem   \cite[Theorem 4.1]{guillinqsd}, there is a unique couple $(\mu, \varphi)$ where $\mu\in \mathcal P_{\mathbf W}(\mathscr D)$ is a probability measure on $\mathscr D$, charging all non-empty open subsets of $\mathscr D$, and $\varphi \in b_{\mathbf W}\mathcal B (\mathscr D)$ is a continuous and everywhere positive function on $\mathscr D$ with $\mu(\varphi)=1$ such that  
$$
\mu P_1^{\mathscr D,V} = e^{\Lambda } \mu, \  P_1^{\mathscr D,V} \varphi = e^{\Lambda} \varphi,  
$$
where $\Lambda$ is the spectral radius of $P_1^{\mathscr D,V} $ and
\begin{equation}\label{Eq33}
\| e^{-\Lambda n} P_n^{\mathscr D,V}f- \varphi \mu(f)\|_{\mathbf W} \leqslant C e^{-\delta n} \|f\|_{\mathbf W},\ \forall f\in b_{\mathbf W}\mathcal B(\mathscr D),\ n\in \mathbb{N}.
\end{equation}
From this exponential convergence and using the semigroup property, it is quite easy to extend it to whole semigroup $(P_t^{\mathscr D,V},t\ge 0)$ (for the details, see \cite[proof of Theorem 5.3]{guillinqsd}), to finally deduce that all the assertions of Theorem \ref{thm-main1} hold (with $\Lambda=\Lambda_{\mathscr D}(V)$) except the inequality  $\Lambda_{\mathscr D}(V)< \sup_{\mathscr D} V$ which remains to be proved.

Note that
 $\Lambda_{\mathscr D}(V)\le \sup_{\mathscr D} V$ follows from the equality
$$\mathbb E_\mu \Big[  e^{\int_{0}^{t} V(X_s) dx}\mathbf 1_{t<\sigma_{\mathscr D}} \Big]=\mu  P_t^{\mathscr D,V}(\mathbf 1)= e^{\Lambda_{\mathscr D}(V) t} \mu(\mathbf 1), \forall t\ge 0.$$
In addition, the fact that  $\Lambda_{\mathscr D}(V)<\sup_{\mathscr D} V$ follows from the last condition in \textbf{(C5)}. Indeed, if in contrary $\Lambda_{\mathscr D}(V)=\sup_{\mathscr D} V$, then for all $t\ge 0$,
$$\mathbb E_\mu  [  e^{\int_{0}^{t} V(X_s) dx}\mathbf 1_{t<\sigma_{\mathscr D}} ]= e^{t\sup_{\mathscr D} V}.$$
Since $\mu$ charges all non empty open subset of $\mathscr D$ and because the function $x\mapsto e^{\Lambda_{\mathscr D}(V) t}  \mathbf 1- P_t^{\mathscr D,V}(\mathbf 1)$ is non negative and continuous, we get that  for all $x\in \mathscr D$ and $t\ge 0$,
$$ e^{t\sup_{\mathscr D} V}=\mathbb E_x  [  e^{\int_{0}^{t} V(X_s) dx}\mathbf 1_{t<\sigma_{\mathscr D}} ]\le e^{t\sup_{\mathscr D} V}\mathbb P_x  [t<\sigma_{\mathscr D}],$$
so that $\mathbb P_x  [t<\sigma_{\mathscr D}]=1$, i.e.  $\mathbb P_x  [ \sigma_{\mathscr D}=+\infty]=1$, a contradiction with the last condition in~\textbf{(C5)}. Hence, it holds $\Lambda_{\mathscr D}(V)<\sup_{\mathscr D} V$.
\end{proof}

\section{Proof of Theorem \ref{thm-main2}}

\label{secPr2}
\subsection{A generalized G\"artner-Ellis theorem}
Our main tool in the proof of Theorem \ref{thm-main2} is the following well known generalized G\"artner-Ellis theorem (\cite{DS89, Wu97}) that we recall.

\begin{thm} \label{thm_gGE} Let $(\mathbb P_T)_{T>0}$ be a sequence of probability distribution on $\mathcal P(\mathscr S)$, such that:

\begin{enumerate}
\item For any potential $V\in b\mathcal B(\mathscr D)$,
\begin{equation}
\boldsymbol \Lambda(V)=\lim_{T\to +\infty} \frac{1}{T} \log \int_{\mathcal P(\mathscr D)} e^{T\beta(V)} \mathbb P_T (d\beta).
\end{equation}

  \item The mapping $V\mapsto \boldsymbol \Lambda(V)$ is Gateaux differentiable on $b\mathcal B(\mathscr D)$, i.e. the mapping $t\mapsto \boldsymbol \Lambda(V+t V_1)$ is differentiable  at $t=0$ for any potential $V,V_1\in b\mathcal B(\mathscr D)$.
  \item $\mathbb P_T$ is exponentially $*$-tight, namely,  $\forall L>0,\ \exists \mathbf K $ compact in $(\mathcal P(\mathscr D),\tau)$ such that for any measurable neighborhood $\mathcal{N}$ of $\mathbf K$,
  $$
  \limsup_{T\to +\infty} \frac{1}{T} \log \mathbb P_T [L_T\notin \mathcal{N}] \leqslant -L.
  $$
\end{enumerate}
Then, $\mathbb P_T$ satisfies the {\rm L.D.P.} on $(\mathcal P(\mathscr D), \tau)$ with speed $T$ and the rate function
$$
I(\beta)=\sup\left\{\beta(V)-\boldsymbol \Lambda (V),\ V\in \mathcal C_b(\mathscr D)\right\}, \ \beta\in \mathcal P(\mathscr D),
$$
which is the Legendre transform $\boldsymbol  \Lambda^*$ of the Cramer functional $\boldsymbol \Lambda$.
\end{thm}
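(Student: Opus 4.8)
The plan is to apply the abstract Gärtner--Ellis / Baldi theorem in the locally convex topological vector space setting, with the dual pairing $\langle \mathcal P(\mathscr D),\, b\mathcal B(\mathscr D)\rangle$ that generates the $\tau$-topology. First I would recall that $(\mathcal P(\mathscr D),\tau)$ sits inside the locally convex space $\mathcal M_b(\mathscr D)$ of bounded signed measures with the $\sigma(\mathcal M_b,b\mathcal B)$-topology, whose topological dual is exactly $b\mathcal B(\mathscr D)$ (this is the standard identification used e.g.\ in \cite{DS89,Wu97}). Hypothesis (1) says the Cramér functional $\boldsymbol\Lambda(V) = \lim_T \tfrac1T\log\int e^{T\beta(V)}\mathbb P_T(d\beta)$ exists as a finite limit for every $V\in b\mathcal B(\mathscr D)$; since $V$ is bounded, $|\tfrac1T\log\int e^{T\beta(V)}\mathbb P_T(d\beta)|\le \|V\|$, so $\boldsymbol\Lambda$ is finite and $\|V\|$-Lipschitz, in particular finite everywhere on the dual and continuous. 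Hypothesis (3), exponential $*$-tightness, provides the exponential tightness needed to transfer the upper bound from compact sets to all closed sets.

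The core of the argument is the equivalence between Gateaux differentiability of $\boldsymbol\Lambda$ (Hypothesis (2)) and the ``exposed point / essential smoothness'' condition in the abstract theorem. I would proceed as follows. Step one: by exponential $*$-tightness, $\mathbb P_T$ satisfies a weak (compact-set) large deviation upper bound with some rate function; combined with the standard subadditivity/projective-limit argument (à la Dawson--Gärtner, or directly the approach of \cite{Wu97}), one gets that $\mathbb P_T$ satisfies a full LDP along subsequences with some good rate function $I'$, and the Varadhan / Bryc-type identity forces $\boldsymbol\Lambda(V) = \sup_\beta\{\beta(V)-I'(\beta)\}$, i.e.\ $\boldsymbol\Lambda$ is the Legendre transform of $I'$. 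Step two: since $\boldsymbol\Lambda$ is finite and continuous on all of $b\mathcal B(\mathscr D)$ and Gateaux differentiable, every point of $b\mathcal B(\mathscr D)$ is a point of differentiability; by the duality of Legendre transforms (a convex, lower semicontinuous function equals the Legendre transform of its Legendre transform), $I' = \boldsymbol\Lambda^* = I$, the function in the statement. The key point that differentiability buys us is that every $\beta$ in the effective domain of $I$ is an \emph{exposed point} with an exposing hyperplane in the dual: if $\beta_0 = \nabla\boldsymbol\Lambda(V_0)$ (in the Gateaux sense), then $V_0$ exposes $\beta_0$ for $I$, which upgrades the compact-set lower bound to the full open-set lower bound. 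Step three: assemble the pieces — the upper bound on closed sets comes from compact-set upper bound plus $*$-tightness, the lower bound on open sets comes from the exposed-points argument, so the full LDP on $(\mathcal P(\mathscr D),\tau)$ with rate $I = \boldsymbol\Lambda^*$ holds.

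One technical wrinkle worth flagging: the rate function in the statement is written as a sup over $V\in\mathcal C_b(\mathscr D)$ rather than over all of $b\mathcal B(\mathscr D)$, whereas $\boldsymbol\Lambda$ and the differentiability hypothesis are stated on $b\mathcal B(\mathscr D)$. I would reconcile this by noting that, because $\mathbb P_T$ is carried by $\mathcal P(\mathscr D)$ and $\beta\mapsto\beta(V)$ is the relevant pairing, and because $\boldsymbol\Lambda$ is $\|\cdot\|$-Lipschitz, the two suprema agree on $\mathcal P(\mathscr D)$ (continuous bounded functions are $\|\cdot\|$-dense enough in the sense needed for the Legendre transform to be unchanged, or one simply observes that the $\tau$-topology is generated already by $b\mathcal B$, and the relevant result of \cite{Wu97} is stated with exactly this pair).

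The main obstacle I expect is Step one: getting from the finiteness of the limit $\boldsymbol\Lambda(V)$ on the (large) dual $b\mathcal B(\mathscr D)$ together with only $*$-tightness to a bona fide subsequential LDP in an infinite-dimensional locally convex space is the delicate part, since $(\mathcal M_b,\tau)$ is not metrizable and one cannot invoke the elementary $\mathbb R^d$ Gärtner--Ellis theorem; this is precisely where the machinery of \cite{DS89} (Section on abstract Gärtner--Ellis) or \cite{Wu97} must be cited rather than reproved. Everything else — Lipschitz continuity of $\boldsymbol\Lambda$, the Legendre duality, the exposed-point upgrade of the lower bound — is routine convex analysis once that framework is in place. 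Accordingly, I would present this proof largely as an application of the cited abstract theorems, checking only that Hypotheses (1)--(3) match their assumptions verbatim.
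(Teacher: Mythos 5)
The paper does not actually prove Theorem \ref{thm_gGE}: it introduces it with the words ``the following well known generalized G\"artner--Ellis theorem (\cite{DS89, Wu97}) that we recall,'' and then applies it as a black box in the proof of Theorem \ref{thm-main2}. Your proposal, which ultimately also defers to \cite{DS89,Wu97} while sketching the internal architecture of the argument (upper bound on closed sets from $\boldsymbol\Lambda$ plus exponential $*$-tightness; lower bound on open sets from exposed points supplied by Gateaux differentiability; identification of the rate as $\boldsymbol\Lambda^*$ by Fenchel--Moreau), is consistent with the posture the paper takes, so there is no in-paper proof to compare against line by line.

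Two loose ends in your sketch are worth flagging. First, the reconciliation of $\mathcal C_b(\mathscr D)$ versus $b\mathcal B(\mathscr D)$ is shaky as written: $\mathcal C_b$ is not sup-norm dense in $b\mathcal B$, so a density argument cannot be invoked directly. The clean route is that once the LDP in the $\tau$-topology holds with a good rate $I'$, Varadhan's lemma applies to $\beta\mapsto\beta(V)$ for every $V\in b\mathcal B(\mathscr D)$ (these maps are $\tau$-continuous and bounded), giving $\boldsymbol\Lambda(V)=\sup_\beta\{\beta(V)-I'(\beta)\}$ on all of $b\mathcal B(\mathscr D)$ and hence $I'=\boldsymbol\Lambda^*$ with the transform over $b\mathcal B(\mathscr D)$ — which is exactly how the paper writes $I_{\mathscr D}$ in Theorem \ref{thm-main2}; that the $\mathcal C_b$-transform in the statement of Theorem \ref{thm_gGE} gives the same function is a separate (and not entirely trivial) assertion. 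Second, ``if $\beta_0=\nabla\boldsymbol\Lambda(V_0)$ then $V_0$ exposes $\beta_0$'' hides the step that the Gateaux derivative of $\boldsymbol\Lambda$ at $V_0$, which a priori is only a bounded finitely additive set function dual to $b\mathcal B(\mathscr D)$, is actually a countably additive probability measure on $\mathscr D$; this requires the tightness hypothesis and is handled in \cite{Wu97}. Neither issue is fatal — both are resolved in the cited sources — but they are precisely the places where an unreferenced self-contained proof would need real work, which you correctly identify as the reason to cite rather than reprove.
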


\brmk\label{rem42} {\rm In the theorem above, without the exponential $*$-tightness condition (3), the local L.D.P. below holds: for any $\beta\in \mathcal P(\mathscr D)$,

\begin{enumerate}
  \item  for any measurable neighborhood $\mathcal{N}$ of $\beta$ (in $\mathcal P(\mathscr D)$),
  \begin{equation}\label{rem42a}
\lim\inf_{T\to +\infty}
                       \frac{1}{T}\log \mathbb P_T(\mathcal{N}) \ge   - I(\beta);
   \end{equation}

  \item for any $a<I(\beta)$, there is a measurable neighborhood $\mathcal{N}$ of $\beta$,
  \begin{equation}\label{rem42b}
  \lim_{T\to +\infty}  \frac{1}{T}\log \mathbb P_T(\mathcal{N}) \leqslant - a.
\end{equation}
\end{enumerate}
The exponential $*$-tightness is well-adapted to the non-metrisable topology (such as the $\tau$-topology), and it is equivalent to the usual exponential tightness for large deviations of a sequence of probability distributions on Polish spaces (see \cite{Wu97}).
}\nrmk

\begin{proof}[Proof of Theorem \ref{thm-main2}]
Assume  {\rm {\bf (C1)}}$\to$ {\rm {\bf (C5)}}.
The proof is divided into several steps and consists in using Theorem \ref{thm_gGE} above.

\medskip

\noindent
{\bf Step 1.} In this step we prove the equality \eqref{eq.Ll}. Let
$$\mathbb P_T(\cdot)=\mathbb P_\nu\big [L_T\in \cdot|T<\sigma_{\mathscr D}\big ].$$ Let $\Lambda_{\mathscr D}(V)$ be the minus Dirichlet eigenvalue given by \eqref{sp_radius}, $\mu_{\mathscr D,V}$ the left eigen-probability distribution,
$\varphi_{\mathscr D,V}$ the right eigenfunction, of $P_t^{\mathscr D,V}$ over $b_{\mathbf W}\mathcal B(\mathscr D)$ which are given in Theorem \ref{thm-main1}. We have  by Theorem \ref{thm-main1},
$$
\aligned
\int_{\mathcal P(\mathscr S)} e^{T\beta(V)} \mathbb P_T (d\beta)&=  \frac{\mathbb E_\nu\big[e^{\int_{0}^{T} V(X_t) dt} \mathbf 1_{T<\sigma_{\mathscr D}}\big]}{\mathbb P_\nu[T<\sigma_{\mathscr D}]}\\
&= e^{\left(\Lambda_{\mathscr D}(V)-\Lambda_{\mathscr D}(0)\right)T} \, \frac{e^{-\Lambda_{\mathscr D}(V)T} \nu (P_T^{\mathscr D,V} \mathbf 1 )}{\nu(e^{-\Lambda_{\mathscr D}(0) T}  P_T^{\mathscr D} \mathbf 1)} \\
&= e^{\left(\Lambda_{\mathscr D}(V)-\Lambda_{\mathscr D}(0)\right)T}\,  \frac{\nu(\varphi_{\mathscr D, V}) \mu_{\mathscr D,V}(\mathbf 1) + O(e^{-\delta T})}{ \nu(\varphi_{\mathscr D,0}) \mu_{\mathscr D}(\mathbf 1) + O(e^{-\delta T})}
\endaligned
$$
Thus, one has that $\boldsymbol \Lambda(V)$ is given by:
\begin{equation}\label{eq.Ll}
\boldsymbol \Lambda(V)=\lim_{T\to +\infty} \frac{1}{T} \log \int_{\mathcal P(\mathscr S)} e^{T\beta(V)} \mathbb P_T (d\beta)=\Lambda_{\mathscr D}(V)-\Lambda_{\mathscr D}(0).
\end{equation}

\noindent
{\bf Step 2 (Gateaux-differentiability).} By Theorem \ref{thm-main1},  we recall that the operator $P_1^{\mathscr D,V}$ has a spectral gap near its spectral radius $e^{\Lambda(V)}$ on $b_{{\mathbf W}}\mathcal B(\mathscr D)$ and its eigen-projection is one-dimensional. Given two potentials $V,V_1\in b\mathcal B(\mathscr D)$, since for any $f\in b_{\mathbf W}\mathcal B(\mathscr D)$, the mapping
$$\lambda \in \mathbb C \mapsto P_1^{\mathscr D, V+\lambda V_1}f$$
valued in the Banach space $b_{{\mathbf W}}\mathcal B(\mathscr D)$,  is analytic on a neighborhood of $0$ in $\mathbb C$, we deduce by the perturbation theory of operators \cite[Chapter 7, Theorems 1.7 and 1.8]{Kato}), that the mapping $$\lambda \mapsto e^{\Lambda_{\mathscr D}(V+\lambda V_1)} \text{ (the spectral radius of $P_1^{ \mathscr D,V+\lambda V_1}$)}$$   is also analytic on a neighborhood of $0$ in  $\mathbb{C}$. By \eqref{eq.Ll}, this shows item (2) in Theorem~\ref{thm_gGE}, namely that $V\in b\mathcal B(\mathscr D) \mapsto \boldsymbol \Lambda(V)$ is Gateaux differentiable.
\medskip

Let us recall  that those first two steps yield the local L.D.P. for $\mathbb P_\nu[L_T\in\cdot|T<\sigma_{\mathscr D}]$ with the rate function $I_{\mathscr D}$, see indeed Remark \ref{rem42}.

\medskip

\noindent
{\bf Step 3 (exponential *tightness).}  We now prove item (3) in Theorem~\ref{thm_gGE}.
Let us first recall that it follows from \cite{Wu2001}, that under the assumptions {\bf (C1)}, {\bf(C2)}, {\bf (C3)},  $\mathbb P_\nu[L_T\in \cdot]$ satisfies, on $(\mathcal P(\mathscr S), \tau)$, the good upper bound of large deviations with Donsker-Varadhan's rate function $J(\beta)$ given by \eqref{J_DV}. Thus for any measurable $\tau$-closed subset $\mathbf F$ of $\mathcal P(\mathscr D)$, $$\mathbf F=\mathbf F\cap \{\beta\in \mathcal P(\mathscr S); \beta(\mathscr D)=1\} \text{  is closed in $(\mathcal P(\mathscr S), \tau)$,}$$
 and
$$\{L_T\in \mathbf F,\ T<\sigma_{\mathscr D}\}\subset \big\{L_T\in \mathbf F\cap \{\beta\in \mathcal P(\mathscr S); \beta(\mathscr D)=1\}\big\},$$
 and thus we get the following upperbound:
$$
\limsup_{T\to+\infty} \frac{1}{T}\log \mathbb P_\nu\big [L_T\in \mathbf F, T<\sigma_{\mathscr D}\big ]\leqslant - \inf_{\beta\in \mathbf F} J(\beta).
$$
Since by Theorem \ref{thm-main1} (with $V=0$), we have
$$
\lim_{T\to+\infty} \frac{1}{T}\log \mathbb P_\nu[T<\sigma_{\mathscr D}]=\Lambda_{\mathscr D}(0),
$$
we thus finally deduce that
\begin{equation}\label{Eq4.4}
\limsup_{T\to+\infty} \frac{1}{T}\log \mathbb P_\nu\big [L_T\in \mathbf F|T<\sigma_{\mathscr D}\big ]\leqslant - \inf_{\beta\in \mathbf F} J(\beta) - \Lambda_{\mathscr D}(0).
\end{equation}
This upper bound, together with the local L.D.P. remarked above, yields the following inequality:
\begin{equation}\label{Eq4.5}
I_{\mathscr D}(\beta) \ge   J(\beta) + \Lambda_{\mathscr D}(0)=J(\beta) -\lambda_{\mathscr D},\ \forall \beta\in \mathcal P(\mathscr D).
\end{equation}
Note that this   is half of the     equality \eqref{eq2.16}.

Now for any $L>0$, consider  the set $\mathbf K$ defined by
$$\mathbf K:=\big \{\beta\in \mathcal P(\mathscr S); \beta(\mathscr D)=1,\ J(\beta)\leqslant L-\Lambda_{\mathscr D}(0)\big \}\subset \mathcal P(\mathscr D).$$
 It is a compact subset of  $(\mathcal P(\mathscr D), \tau)$.
We then obtain  using  the upper bound of large deviations above that for any measurable $\tau$-open neighborhood $\mathcal{N}$ of $\mathbf K$,
$$
\limsup_{T\to+\infty} \frac{1}{T}\log \mathbb P_\nu [L_T\notin \mathcal{N}|T<\sigma_{\mathscr D}]\leqslant - \inf_{\beta\notin \mathcal{N}} J(\beta) - \Lambda_{\mathscr D}(0)\leqslant -L.
$$
This is the desired expotential $*$-tightness.

\medskip

\noindent
{\bf Step 4. } To finish the proof, it remains to show \eqref{thm-main2d}. Notice that $I(\beta)=0$ if and only if $\beta\in \partial \Lambda_{\mathscr D}(0)$ (the sub-differential of $\Lambda_{\mathscr D}$ at $V=0$). Since $\Lambda_{\mathscr D}$ is Gateaux-differentiable, the level set $\{I=0\}$ is a singleton. Thus it remains to prove that $\pi_{\mathscr D}=\varphi_{\mathscr D}\mu_{\mathscr D}\in \partial \Lambda_{\mathscr D}(0)$ or equivalently that
\begin{equation}\label{eq4.7}
\Lambda_{\mathscr D}(V)\ge   \pi_{\mathscr D}(V), \ \forall V\in b\mathcal B(\mathscr D).
\end{equation}
To this end, we have at first by Jensen's inequality:
$$
\aligned
\log \mathbb E_\nu\Big[  e^{\int_{0}^{T} V(X_t) dt} |T<\sigma_{\mathscr D}\Big] &\ge   \mathbb E_\nu\Big[\int_{0}^{T} V(X_t) dt |T<\sigma_{\mathscr D}\Big] \\
&= \frac{\mathbb E_\nu  \Big[ \int_{0}^{T}  \mathbf 1_{t<\sigma_{\mathscr D}} V(X_t) \mathbf 1_{T<\sigma_{\mathscr D}} dt \Big]}{\mathbb P_\nu [T<\sigma_{\mathscr D}]}\\
&= \frac{   \int_{\mathscr D} \int_{0}^{T} P^{\mathscr D}_t V P^{\mathscr D}_{T-t} \mathbf 1 dt d\nu   }{  \int_{\mathscr D} P_{T}^{\mathscr D}  \mathbf 1  d\nu}.\\
\endaligned
$$
In addition, by Theorem \ref{thm-main1}, we have
$$
e^{-\Lambda_{\mathscr D}(V) T}\int_{\mathscr D} P_{T}^{\mathscr D} \mathbf 1 d\nu= \nu(\varphi_{\mathscr D})\mu_{\mathscr D}( \mathbf 1) + O(e^{-\delta T}) \int_{\mathscr D} {\mathbf W} d\nu= \nu(\varphi_{\mathscr D}) + O(e^{-\delta T}),
$$
and (recall that the potential  $V$ is bounded)
$$
\aligned
&e^{-\Lambda_{\mathscr D}(V) T} \int_{\mathscr D}  \int_0^T P^{\mathscr D}_t( V P^{\mathscr D}_{T-t}1) dt d\nu \\
&= \int_{0}^{T}\int_{\mathscr D} e^{-\Lambda_{\mathscr D}(V) t}  P_t^{\mathscr D} \left(V \varphi_{\mathscr D} + O(e^{-\delta(T-t)})|V|\cdot {\mathbf W}\right) d\nu dt \\
&= \int_{0}^{T}\int_{\mathscr D} \left[\varphi_{\mathscr D} \mu_{\mathscr D}(V \varphi_{\mathscr D}) +  O(e^{-\delta T}) {\mathbf W} \right]d\nu dt \\
&= T \pi_{\mathscr D}(V) \cdot \nu(\varphi_{\mathscr D}) + O(1).
\endaligned
$$
Thus we obtain
$$
\aligned
\Lambda_{\mathscr D}(V)&\ge   \limsup_{T\to +\infty} \frac{1}{T} \log \mathbb E_\nu\Big[e^{\int_{0}^{T} V(X_t) dt} \big |T<\sigma_{\mathscr D}\Big]\\
&\ge    \lim_{T\to +\infty} \frac{1}{T}\,  \frac{ e^{-\Lambda_{\mathscr D}(V) T}\int_{\mathscr D} \int_{0}^{T} P^{\mathscr D}_t V P^{\mathscr D}_{T-t}1 dt d\nu   }{ e^{-\Lambda_{\mathscr D}(V) T}\int_{\mathscr D} P_{T}^{\mathscr D} 1  d\nu}\\
&= \lim_{T\to +\infty} \frac{1}{T}\, \frac{T \pi_{\mathscr D}(V) \cdot \nu(\varphi_{\mathscr D}) + O(1) }{\nu(\varphi_{\mathscr D}) + O(e^{-\delta T})}\\
&= \pi_{\mathscr D}(V).
\endaligned
$$
This is the desired result \eqref{eq4.7}. The proof of Theorem \ref{thm-main2} is thus complete.
\end{proof}

\brmk {\rm A quite natural approach for the large deviations of $\mathbb P_\nu[L_T\in\cdot |T<\sigma_{\mathscr D}]$ is to use Varadhan-Ellis principle by approximation (this was suggested by J.D. Deuschel). Let us consider for $n\ge 1$, the potential functions $V_n$ defined by:
$$
V_n(x) = -n \mathbf 1_{\mathscr D^c}(x). 
$$
The killed semigroup $P_t^{\mathscr D}f(x)$ can be approximated by the (non-killed) Feynman-Kac semigroup
$$
P_t^{V_n} f(x) = {\mathbb E}_x \Big[\exp\left(\int_{0}^{t} V_n(X_s) ds\right)f(X_t)\Big].
$$
Under {\bf (C1)} to {\bf (C3)} together with the topological irreducibility of the process on the whole space $\mathscr S$,  for each $n\ge 1$ fixed, as $V_n$ is bounded, by the L.D.P. in \cite{Wu2001} and Varadhan-Ellis principle, the family 
$$
Q_n(L_T\in \cdot):= \frac{{\mathbb E}_\nu \Big[\mathbf 1_{[L_T\in\cdot]}\exp\left(\int_{0}^{t} V_n(X_s) ds\right)\Big]}{{\mathbb E}_\nu \Big[\exp\left(\int_{0}^{t} V_n(X_s) ds\right)\Big]}
$$
satisfies the L.D.P. on $(\mathcal P(\mathscr S), \tau)$ with the rate function 
$$
I_n(\beta) = J(\beta) - \Lambda(V_n),\ \forall \beta\in \mathcal P(\mathscr S). 
$$
On the one hand, if we now let  $n\to+\infty$, $
Q_n(L_T\in \cdot)$ converges to the target distribution: $ \mathbb P_\nu[L_T\in\cdot |T<\sigma_{\mathscr D}]$ and this is  satisfying. However, on the other hand,  it remains now  to exchange the limit order. For the upper bound of L.D.P., it is enough to prove that $\Lambda(V_n)\to \Lambda(0)$ (which is already quite difficult). But the main difficulty with this  approach  follows from   the lower bound of L.D.P.. Without further assumptions on the subset ${\mathscr D}$, such as the connectedness in the case when the paths of $(X_t,t\ge 0)$ are a.s. continuous, it is easy to see that the L.D.P. in Theorem \ref{thm-main2}  with a convex rate function fails.}
\nrmk

\section{Proof of Corollary \ref{cor22}}
\label{sec.Pr3}

\begin{proof}[Proof of Corollary \ref{cor22}]
 Assume {\rm {\bf (C1)}}$\to$ {\rm {\bf (C6)}}.
The proof of Corollary \ref{cor22} is divided into several steps.
\medskip

\noindent
{\bf Step 1 (Preparation). }
In the framework of {\rm {\bf (C1)}}$\to$ {\rm {\bf (C3)}}, under the extra condition that $P_t$ is topologically irreducible on $\mathscr S$,  our Markov process $(X_t,t\ge 0)$ admits a unique invariant probability measure $\pi$, which charges all non-empty subsets of $\mathscr S$, and $P_{t}(x,dy)=p_t(x,y) \pi(dy)$ is absolutely continuous w.r.t. $\pi$ for all $t\ge   2t_0$, see indeed \cite{Wu2001}. Thus, we have
\begin{equation}\label{Eq5.2}
J(\beta)<+\infty \Longrightarrow \beta\ll  \pi,
\end{equation}
 a fact which was noted in \cite{Wu2001}.  Let us now consider the quadratic form $\mathcal E$ defined by
$$\mathcal E(f,f)=\<\sqrt{-\mathfrak L} f, \sqrt{- \mathfrak L} f\>_{L^2(S,\pi)},$$ with domain $\mathbb D(\mathcal E)=\mathbb D_{L^2(S, \pi)}(\sqrt{-\mathfrak L})$. The quadratic form $\mathcal E$  is  the so-called Dirichlet form of the (non-killed) reversible Markov semigroup $(P_t,t\ge 0)$ over $\mathscr S$.
It is well known (\cite{DS89}) that for $\beta= h\pi \in \mathcal P(\mathscr S)$,
\begin{equation}\label{Eq5.3}
J(\beta)=\begin{cases}
           \mathcal E(\sqrt{h}, \sqrt{h}), & \mbox{if } \sqrt{h}\in \mathbb D(\mathcal E); \\
           +\infty, & \mbox{otherwise}.
         \end{cases}
\end{equation}

%
%

\noindent
{\bf Step 2 (Rayleigh's principle). } Given a potential function $V\in b\mathcal B(\mathscr D)$, the semigroup $P_t^{\mathscr D,V}$ is symmetric on $L^2(\mathscr D, \mathbf 1_{\mathscr D} \pi)$. Its log spectral radius on $L^2(\mathscr D, \mathbf 1_{\mathscr D} \pi)$, defined by
$$
\Lambda_{\mathscr D,2}(V) := \lim_{T\to+\infty} \frac{1}{T}\log \|P_t^{\mathscr D,V}\|_{L^2(\mathscr D, \mathbf 1_{\mathscr D} \pi)},
$$
is always not greater than its log spectral radius on    in the Banach space $b_{\mathbf W}\mathcal B(\mathscr D)$, i.e. than  $\Lambda_{\mathscr D}(V)$ (by the spectral decomposition). On the other hand,
take an initial distribution $\nu=h \pi$ so that $h\mathbf 1_{\mathscr D^c}=0, \ \pi\text{-a.e}.$, $h\in L^2( \mathscr S,\pi)$ and $\nu(\mathbf W)<+\infty$. Then, we have  by Theorem \ref{thm-main1}:
$$
\Lambda_{\mathscr D,2}(V)\ge   \lim_{T\to+\infty} \frac{1}{T}\log \nu(P_t^{\mathscr D,V}\mathbf 1)= \Lambda_{\mathscr D}(V).
$$
Therefore, it holds for all $V\in b\mathcal B(\mathscr D)$:
\begin{equation}\label{Eq5.1}
\Lambda_{\mathscr D}(V)=\Lambda_{\mathscr D,2}(V).
\end{equation}

The quadratic Dirichlet form associated with $P_t^{\mathscr D,V}$ on $L^2(\mathscr D, \mathbf 1_{\mathscr D} \pi)$ is defined by (see indeed \cite{MR92}), for all $f\in \mathbb D(\mathcal E_{\mathscr D,V})=\{g\in \mathbb D(\mathcal E); \ \mathbf 1_{\mathscr D^c}g=0, \pi\text{-a.e}.\}$,
\begin{align*}
\mathcal E_{\mathscr D,V}(f, f) &= \mathcal E(f,f) - \int_{\mathscr D} V f^2 d\pi.
\end{align*}
By Rayleigh's principle, one has:
$$
\aligned
& \Lambda_{\mathscr D,2}(V) \\
&= \sup\{-\mathcal E_{\mathscr D,V}(f, f); \ f\in \mathbb D(\mathcal E_{\mathscr D,V}), \pi(f^2)=1\}\\
&= \sup\left\{ \int_{\mathscr S} V f^2 d\pi -\mathcal E(f,f); \ f\in \mathbb D(\mathcal E), \pi(f^2)=1\text{ and } f\ge   0, \mathbf 1_{\mathscr D^c}f=0, \ \pi\text{-a.e}.\right\}\\
\endaligned
$$
where we have used the fact that $\mathcal E_{\mathscr D,V} (|f|,|f|)\leqslant \mathcal E_{\mathscr D,V} (f,f)$. Thus by \eqref{Eq5.1} and \eqref{Eq5.2}, we  deduce that
$$
\Lambda_{\mathscr D}(V)=\Lambda_{\mathscr D,2}(V)  = \sup\left\{ \int_{\mathscr S} V d\beta-J(\beta); \ \beta\in \mathcal P(\mathscr D)\right\}.
$$
As $\boldsymbol \Lambda(V)=\Lambda_{\mathscr D}(V)-\Lambda_{\mathscr D}(0)$  {(see \eqref{eq.Ll})}  {and since by definition $\Lambda_{\mathscr D}(0) =   -\lambda_{\mathscr D}$}, we get by Legendre-Fenchel's theorem,
$$
I_{\mathscr D}(\beta)=\boldsymbol \Lambda^*(\beta) = (\Lambda_{\mathscr D})^*(\beta) + \Lambda_{\mathscr D}(0) = J(\beta) -\lambda_{\mathscr D}, \ \forall \beta\in \mathcal P(\mathscr D).
$$
This is the desired result. The proof of  Corollary \ref{cor22} is thus complete.
\end{proof}


\section{Examples}
\label{pr.Ex}
In this section we give some examples, arising from statistical physics,  of processes satisfying \textbf{(C1)} $\to$ \textbf{(C5)}.

\subsection{Kinetic Langevin process driven by a Brownian motion}
Let $U:\mathbb R^d\to [1,+\infty]$ be  measurable function
and consider the phase space
$$\mathscr S=\{U<+\infty\}\times \mathbb R^d.$$
Let us consider the so-called  kinetic Langevin process $(X_t=(x_t,v_t),t\ge 0)$ which is   the solution to the stochastic differential equation in $\mathscr S$:

 \begin{equation}\label{eq.Lcin}
dx_t=v_tdt, \ dv_t=-\nabla U(x_t)dt-\gamma v_tdt + dB_t,
\end{equation}
where $\gamma>0$ is the friction parameter and $(B_t,t\ge 0)$ is $d$-dimensional standard Brownian motion. The validity of the conditions \textbf{(C1)} $\to$ \textbf{(C5)} have been shown  in:
\begin{enumerate}
\item[\textbf a.]  In \cite{guillinqsd} when  the potential  $U$ is only $\mathcal C^1$ over $\mathbb R^d$, namely when $\nabla U$ is continuous over $\mathbb R^d$ (in this case $\mathscr S=\mathbb R^d\times \mathbb R^d$), and when   $\mathscr D =\mathscr O\times \mathbb R^d$  where   $\mathscr O$ is a $\mathcal C^2$ subdomain  of $ \mathbb R^d$.
\item[\textbf b.] In  \cite{guillinqsd2} when the potential $U$ models the  singular interactions between the particules and when   $\mathscr D =\mathscr O\times \mathbb R^d$  where   $\mathscr O$ is a subdomain of $\{U<+\infty\}$   with   $\mathcal C^2$ boundary inside $\{U<+\infty\}$.  More precisely, in this case, $d=kN$ (with $k,N\ge 1$), $N$ is the number of $\mathbb R^k$-particles, and  $U$ has the form
$$U(x^1,\ldots,x^N)=\sum_{i=1}^N V_c(x^i)+\sum_{1\le i<j\le N}V_I(x^i-x^j), \ x_i\in \mathbb R^k,$$
where $V_I:\mathbb R^k\to  \mathbb R\cup\{+\infty\}$ is  e.g. the Coulomb potential, the Riesz potential,  or the Lennard-Jones potential.
\end{enumerate}
  More recently, the authors proved in~\cite{guillinqsd3} that in both cases \textbf a and  \textbf b above, the conditions \textbf{(C4)} and  \textbf{(C5)}  actually  hold   with any such subdomain $\mathscr D$ of the phase space $\mathscr S$ (i.e. without any assumption on the regularity of $\mathscr O$).
 %
  We refer to~\cite{ramilarxiv2,benaim2021degenerate,champagnat2024quasi} for related results
%

\subsection{SDE   driven by a rotationally invariant stable processes}

In this section, we prove Theorem \ref{th.LL} below for the process   solution to the elliptic  stochastic differential equation \eqref{eq.Levy} driven by a rotationally invariant stable processes. This theorem aims at showing that such processes satisfy under mild assumptions the conditions {\rm\textbf{(C1)}} $\to$ {\rm\textbf{(C5)}}.

  \subsubsection{Definition of the process and assumptions}
Let  $(\Omega, \mathcal F, (\mathcal F_t)_{t\ge 0}, \mathbb P)$ be a  filtered probability space  (where the filtration satisfies the usual condition).
Let us consider a (L\'evy) rotationally invariant $\alpha$-stable  process $(L^\alpha_t,t\ge 0)$ on $\mathbb R^d$ ($0<\alpha<2, d\ge 1)$, see e.g.~\cite[Example 3.3.8 and Section 4.3.4]{applebaum2009levy}. We denote by $F_\alpha$ its L\'evy measure and we recall that it is pure jump  process where:
  \begin{equation}\label{eq.nu}
F_\alpha(dz)= \frac{C_\alpha  }{|z|^{d+\alpha}}\,  dz, \ C_\alpha>0.
\end{equation}
Recall also  that for all $t\ge 0$,  $L^\alpha_t$ admits moments of order $q\in [0,\alpha)$.
Let $    \beta>1$ and  $U: \mathbb R^d\to [1,+\infty)$ be  a $\mathcal C^2$ function such that for some $R_*>1$ and $c_*>0$,
 \begin{equation}\label{eq.Uh}
 \nabla U(x)\cdot x \ge   c_* |x|^{2    \beta} \text{ for all $|x|>R_*$}.
 \end{equation}
 Remark that in the case $U(x)=|x|^k$ (for large $|x|$), condition \eqref{eq.Uh} is verified if $k>2$ with $\beta=k/2$.
 Let $(X_t(x),t\ge 0)$ be the solution (see Corollary \eqref{co.Ex}) of the L\'evy driven elliptic stochastic differential equation
 \begin{equation}\label{eq.Levy}
dX_t= -\nabla U(X_t)dt + dL^\alpha_t, \ X_0=x\in \mathbb R^d.
\end{equation}
 For a non empty subset  $  \mathscr D$ of $\mathbb R^d$, we recall that $\sigma_{\mathscr D}=\inf\{t\ge 0, X_t\notin  \mathscr D\}$.  
 In the rest of this section, we check that the process $(X_t,t\ge 0)$ satisfies \textbf{(C1)}  $\to$ \textbf{(C5)}.  In what follows, $B(x,r)$ is the open ball of $\mathbb R^d$ centered at $x$ of radius $r>0$.
  Let us mention that one can easily adapt our analysis to non gradient vector field in \eqref{eq.Levy}.

\subsubsection{On  Assumptions {\rm \textbf{(C1)}} and {\rm \textbf{(C3)}}}
The infinitesimal generator of \eqref{eq.Levy} is given by, for $\psi \in \mathcal C^2_c(\mathbb R^d)$ (see e.g.~\cite[Section 6.7]{applebaum2009levy}),
 $$\mathscr L^X\psi(x)= -\nabla U(x)\cdot \nabla \psi (x)+\int_{\mathbb R^d} \big [ \psi(x+z)-\psi(x)-\nabla \psi(x)\cdot z \mathbf 1_{\{|z|\le 1\}}\big] F_\alpha(dz).$$
  In the following
 $\theta>0$ is small enough such that
\begin{equation}\label{eq.a1}
   2\beta \theta < \alpha \wedge 1.
  \end{equation}
  Consider a smooth function  $\mathbf V:\mathbb R^d\to [1,+\infty)$ such that for $|x|>1$, $\mathbf  V(x)=2+ |x|^{    \beta\theta}$.
Then, for $p>1$, define the function  $\mathbf W$ by
\begin{equation}\label{eq.LW-S}
 \mathbf W= \mathbf  V^{1/p}.
\end{equation}

 \begin{prop}\label{pr.C3}
 Assume \eqref{eq.Uh}.
Then, for any $p>1$, {\rm \textbf{(C3)}} is satisfied with  $\mathbf W$ defined by \eqref{eq.LW-S}.
 \end{prop}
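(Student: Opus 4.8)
The plan is to verify the drift condition directly for $\mathbf V=\mathbf W^p$, i.e. to exhibit constants $r_n\to+\infty$, $b_n>0$ and balls $K_n\uparrow\mathbb R^d$ with $-\mathscr L^X\mathbf V\ge r_n\mathbf V-b_n\mathbf 1_{K_n}$ everywhere (hence q.e.), and separately to check that $\mathbf V$ and $\mathbf W$ lie in $\mathbb D_e(\mathscr L^X)$. Write $\kappa:=\beta\theta$, so by \eqref{eq.a1} one has $2\kappa<\alpha\wedge1$, in particular $0<\kappa<\tfrac12$ and $\kappa<\alpha$. For $|x|>2$, $\mathbf V$ coincides with $2+|\cdot|^\kappa$ on a ball of radius $1$ around $x$, so it is $\mathcal C^2$ there with $\nabla\mathbf V(x)=\kappa|x|^{\kappa-2}x$ and $|D^2\mathbf V(x)|\le C|x|^{\kappa-2}$; hence, using \eqref{eq.Uh},
$$-\nabla U(x)\cdot\nabla\mathbf V(x)=-\kappa|x|^{\kappa-2}\,\nabla U(x)\cdot x\ \le\ -c_*\kappa\,|x|^{2\beta+\kappa-2}\qquad(|x|>R_*\vee2).$$
The crux is therefore to bound the nonlocal part
$$\mathscr L^X_{\mathrm{jump}}\mathbf V(x):=\int_{\mathbb R^d}\big[\mathbf V(x+z)-\mathbf V(x)-\nabla\mathbf V(x)\cdot z\,\mathbf 1_{\{|z|\le1\}}\big]\,F_\alpha(dz)$$
uniformly in $x$; granting $\sup_{|x|\ge\rho_0}|\mathscr L^X_{\mathrm{jump}}\mathbf V(x)|\le C_0$ one gets $-\mathscr L^X\mathbf V(x)\ge c_*\kappa|x|^{2\beta+\kappa-2}-C_0$ outside a compact set, and since $\beta>1$ the factor $|x|^{2\beta-2}$ pushes the right-hand side above any prescribed multiple of $\mathbf V(x)$ once $|x|$ is large.

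For the jump estimate I would split $F_\alpha$ at $|z|=1$. On $\{|z|\le1\}$ and $|x|\ge2$, the segment $[x,x+z]$ stays in $\{|y|\ge|x|-1\}$, so a second-order Taylor expansion gives $|\mathbf V(x+z)-\mathbf V(x)-\nabla\mathbf V(x)\cdot z|\le C|x|^{\kappa-2}|z|^2$, and $\int_{|z|\le1}|z|^2F_\alpha(dz)=C_\alpha\int_{|z|\le1}|z|^{2-d-\alpha}dz<\infty$ because $\alpha<2$; this contributes $O(|x|^{\kappa-2})$. On $\{|z|>1\}$, on the part where $|x+z|>1$ one uses the subadditivity $\bigl||x+z|^\kappa-|x|^\kappa\bigr|\le|z|^\kappa$ (valid since $0<\kappa<1$), while on the remaining set $\{z:|x+z|\le1\}$ (which, for $|x|$ large, lies in $\{|z|\ge|x|-1\}$ and so carries $F_\alpha$-mass $\lesssim(|x|-1)^{-d-\alpha}$) one uses the crude bound $|\mathbf V(x+z)-\mathbf V(x)|\le\sup_{|y|\le1}\mathbf V+\mathbf V(x)$; altogether this contributes at most $C_\alpha\int_{|z|>1}|z|^\kappa F_\alpha(dz)+o(1)$, which is finite precisely because $\kappa<\alpha$. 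This yields the claimed uniform bound for $|x|\ge\rho_0$; moreover, by dominated convergence $\mathscr L^X\mathbf V$ is continuous on $\mathbb R^d$ (for $|x|\le2$ one just uses the smoothness of $\mathbf V$ there together with the same two integrability facts), hence bounded on every ball.

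It then remains to assemble the constants. Fix $\rho\ge R_*\vee\rho_0\vee2^{1/\kappa}$; for $|x|\ge\rho$ one has $|x|^\kappa\ge\tfrac12\mathbf V(x)$ and $|x|^{2\beta+\kappa-2}\ge\rho^{2\beta-2}|x|^\kappa$, so
$$-\mathscr L^X\mathbf V(x)\ \ge\ c_*\kappa\,|x|^{2\beta+\kappa-2}-C_0\ \ge\ \tfrac{c_*\kappa}{2}\,\rho^{2\beta-2}\,\mathbf V(x)-C_0\ \ge\ \tfrac{c_*\kappa}{4}\,\rho^{2\beta-2}\,\mathbf V(x)\qquad(|x|\ge\rho),$$
the last inequality holding once $\rho$ is large enough, since $\rho^{2\beta-2}\mathbf V(\rho)\to+\infty$. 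Picking an increasing sequence $\rho_n\uparrow+\infty$ with $\rho_1\ge\rho$, and setting $K_n:=\bar B(0,\rho_n)$, $r_n:=\tfrac{c_*\kappa}{4}\rho_n^{2\beta-2}\to+\infty$ and $b_n:=r_n\sup_{K_n}\mathbf V+\sup_{K_n}|\mathscr L^X\mathbf V|$, the inequality $-\mathscr L^X\mathbf V\ge r_n\mathbf V-b_n\mathbf 1_{K_n}$ holds on $\mathbb R^d\setminus K_n$ by the display and on $K_n$ because there $-\mathscr L^X\mathbf V\ge-\sup_{K_n}|\mathscr L^X\mathbf V|\ge r_n\sup_{K_n}\mathbf V-b_n\ge r_n\mathbf V-b_n$. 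Finally, $\mathbf V$ (being $\mathcal C^2$ with $\mathscr L^X\mathbf V$ locally bounded) belongs to $\mathbb D_e(\mathscr L^X)$ by Itô's formula applied to $\mathbf V(X_t)$ localized at the exit times $\tau_R=\inf\{t:|X_t|\ge R\}$, using the non-explosion of $(X_t,t\ge0)$ (Corollary \ref{co.Ex}); that $\mathbf W=\mathbf V^{1/p}\in\mathbb D_e(\mathscr L^X)$ follows the same way, or from \cite[Proposition 5.1]{guillinqsd}. I expect the main obstacle to be exactly the uniform-in-$x$ control of $\mathscr L^X_{\mathrm{jump}}\mathbf V$: one must simultaneously use $\alpha<2$ (for the small jumps, paired against $|z|^2$) and $\kappa=\beta\theta<\alpha$ (for the large jumps, paired against $|z|^\kappa$), and deal carefully with the mild non-homogeneity of $\mathbf V$ near $|x|=1$.
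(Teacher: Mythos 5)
Your proof is correct and follows essentially the same strategy as the paper's: bound the drift term from above via \eqref{eq.Uh}, handle the small jumps by a Taylor estimate on $\mathbf V$ paired with $\int_{|z|\le1}|z|^2F_\alpha(dz)<\infty$ (here you use $\alpha<2$), handle the large jumps by subadditivity of $r\mapsto r^\kappa$ paired with $\int_{|z|>1}|z|^\kappa F_\alpha(dz)<\infty$ (here you use $\kappa<\alpha$), and conclude from $\mathscr L^X\mathbf V/\mathbf V\to-\infty$. The only cosmetic differences are that your Hessian bound $|D^2\mathbf V(x)|\le C|x|^{\kappa-2}$ is sharper than the paper's global bound on ${\rm Hess}\,\mathbf V$ (neither refinement is needed), and you explicitly assemble the sequences $r_n,b_n,K_n$ whereas the paper leaves that last step implicit; both defer $\mathbf V,\mathbf W\in\mathbb D_e(\mathfrak L^X)$ to the It\^o-formula argument in Corollary \ref{co.Ex}.
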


 \begin{proof} Recall  that $\beta>1$. Let $x\in \mathbb R^d$. 
If   $|x|>\max(R_*,1)$, we have $\mathbf  V(x)=2+ |x|^{    \beta\theta}$ and  in this case one has $\nabla\mathbf V(x)=    \beta\theta \, x\,  |x|^{    \beta\theta-2}$. Therefore, using  \eqref{eq.Uh}, one has  $ -\nabla U(x)\cdot \nabla \mathbf V (x)\le - c_*   \beta\theta |x|^{    \beta\theta +2    \beta-2}$ and thus, since $2    \beta>2$, one has when  $|x|\to +\infty$,  $- {\nabla U(x)\cdot \nabla \mathbf V (x)}/{\mathbf V (x)}  \to -\infty$.
Let us now consider for $x\in \mathbb R^d$,
$$\mathscr A_1(x):=  \int_{|z|\le 1}  \big [\mathbf V(x+z)-\mathbf V(x)-\nabla\mathbf V(x)\cdot z  \big] F_\alpha (dz).$$
Since $\text{Hess } \mathbf V$ is bounded over $\mathbb R^d$  , it holds for all $x,z\in \mathbb R^d$:
$|\mathbf V(x+z)-\mathbf V(x)-\nabla\mathbf V(x)\cdot z|\le C |z|^2$, for some $C>0$.
  Hence,   one has for all $x\in \mathbb R^d$, $ | \mathscr A_1(x)| \le C \int_{|z|\le 1} |z|^2 F_\alpha (dz)$.
Thus, $ \mathscr A_1(x)$ is well defined and   $  \mathscr A_1(x) /\mathbf V (x) \to 0 \text{ as } |x|\to +\infty$.
Let us now consider  the quantity
\begin{align*}
&\mathscr A_2(x):=  \int_{|z|> 1}  \big [\mathbf V(x+z)-\mathbf V(x) \big] F_\alpha (dz) \\
&=  \int_{|z|> 1}  \big [\mathbf V(x+z)-\mathbf V(x) \big]\mathbf1 _{|x+z|\ge 1 \text{ and } |x|\ge 1}  F_\alpha (dz) \\
&\quad +\int_{|z|> 1}  \big [\mathbf V(x+z)-\mathbf V(x) \big] \mathbf1 _{|x+z|< 1 \text{ or } |x|< 1}  F_\alpha (dz) =: \mathscr A^a_2(x)+ \mathscr A^b_2(x).
 \end{align*}
 Let $|z|\ge 1$ and $x\in \mathbb R^d$.
 Assume that $x\in \mathbb R^d$ is such that $|x+z|\ge 1$ and $|x|\ge 1$ so that $\mathbf V(x+z)=2+|x+z|^{\beta\theta}$ and $\mathbf V(x)=2+|x|^{\beta\theta}$.
Since $    \beta\theta<1$, using the inequality  $(a +b)^{\beta\theta}\le a^{    \beta\theta}+b^{    \beta\theta} $ for $a,b\ge 0$, we have thanks to \eqref{eq.nu} and  \eqref{eq.a1} that  in this case:
$
 \mathscr A_2^a(x)    \le   \int_{|z|> 1}    [(|x|+|z|)^{     \beta\theta}- |x|^{    \beta\theta}]  F_\alpha (dz) \le  \int_{|z|> 1}    |z|^{     \beta\theta} F_\alpha (dz)<+\infty$.
Assume now that $x\in \mathbb R^d$ is such that $|x+z|< 1$ so that $\mathbf V(x+z)\le c_2:=\sup_{|y|\le 1}\mathbf V$. We then have since $\mathbf V\ge 0$:
\begin{align*}
 \mathscr A^b_2(x)  & \le  \int_{|z|> 1}    [c_2- \mathbf V(x)]  F_\alpha (dz) \le c_2 \int_{|z|> 1}  F_\alpha (dz)<+\infty,
\end{align*}
Assume now that $x\in \mathbb R^d$ is such that $|x|< 1$. Since for some $L>0$, $\mathbf V(y)\le L(2+|y|^{\beta\theta})$, we then have  using again that $\mathbf V\ge 0$:
\begin{align*}
 \mathscr A_2^b(x)  & \le L \int_{|z|> 1}    [2+(|x|+|z|)^{\beta\theta}]  F_\alpha (dz) \le  2L\int_{|z|> 1}  \!\!\!\!F_\alpha (dz)+ L\int_{|z|> 1}  \!\!  ( 1+|z|^{\beta\theta})  F_\alpha (dz).
\end{align*}
Consequently $ \mathscr A_2^b(x)<+\infty$.
In all cases, we have that
$ \mathscr A_2(x) /\mathbf V (x) \to 0$ as $|x|\to +\infty$ (note also that $|\mathscr A_2|\le C(\mathbf V+1)$ for some $C>0$).
In conclusion, we have proved that for some $m>0$, $ |\mathscr L^X\mathbf V(x)|\le m( |x|^{    \beta\theta +2    \beta-2}+1)$ and  $  \mathscr L^X\mathbf V(x)/\mathbf V(x)\to -\infty$ as  $|x|\to +\infty$. As $\mathbf V\in \mathbb D_e(\mathfrak L^X)$, where $\mathbb D_e(\mathfrak L^X)$ is  the extended domain of the process \eqref{eq.Levy}, and $\mathfrak L^X\mathbf V=\mathscr L^X\mathbf V$ (see indeed the end of the proof of Corollary~\ref{co.Ex} below), we have thus proved that \textbf{(C3)} holds.
 \end{proof}

 \begin{cor} \label{co.Ex} Assume \eqref{eq.Uh}.
Then, for all $x\in \mathbb R^d$, there exists a unique pathwise   solution $(X_t(x),t\ge 0)$ to \eqref{eq.Levy} which moreover defines a strong Markov process. Moreover, a.s. $(X_t(x),t\ge 0)\in \mathbb D(\mathbb R_+, \mathbb R^d)$.
 \end{cor}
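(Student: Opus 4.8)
The plan is to prove local existence and pathwise uniqueness for~\eqref{eq.Levy} by exploiting that the noise is additive, and then to rule out explosion via the Lyapunov function of Proposition~\ref{pr.C3}. For the local theory, fix a c\`adl\`ag path of $(L^\alpha_t)$; it is locally bounded, and since $U\in\mathcal C^2$ the vector field $y\mapsto-\nabla U(y+L^\alpha_t)$ is Lipschitz on bounded sets, locally uniformly in $t$. Hence the path-by-path ODE $\dot Y_t=-\nabla U(Y_t+L^\alpha_t)$, $Y_0=x$, has a unique maximal solution $(Y_t)_{t<\tau_\infty}$, and $X_t:=Y_t+L^\alpha_t$ is the unique pathwise solution of~\eqref{eq.Levy} up to the explosion time $\tau_\infty=\lim_n\tau_n$, where $\tau_n:=\inf\{t\ge 0,\ |X_t|\ge n\}$. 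Since $Y$ is continuous and $L^\alpha$ is c\`adl\`ag, $(X_t)_{t<\tau_\infty}$ has c\`adl\`ag paths; moreover the solution depends measurably on $(x,\omega)$ and, by the stationarity and independence of the increments of $(L^\alpha_t)$, enjoys the flow property, so that by standard arguments (see e.g.~\cite{applebaum2009levy}) $(X_t)$ is a strong Markov process on $[0,\tau_\infty)$. It thus remains to show $\tau_\infty=+\infty$ a.s.

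To this end I would invoke Dynkin's formula for the (unbounded, $\mathcal C^2$) function $\mathbf V$ of Proposition~\ref{pr.C3}. Writing It\^o's formula for $\mathbf V(X_t)$ and splitting the jump integral at $|z|=1$: the small-jump compensator is absolutely convergent since $\text{Hess}\,\mathbf V$ is bounded and $\int_{|z|\le1}|z|^2F_\alpha(dz)<+\infty$, while the large-jump part converges since $|\mathbf V(y+z)-\mathbf V(y)|\le C(1+|z|^{\beta\theta})$ uniformly in $y$ and $\int_{|z|>1}|z|^{\beta\theta}F_\alpha(dz)<+\infty$ because $\beta\theta<\alpha$, a consequence of~\eqref{eq.a1}. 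This yields $\mathbf V\in\mathbb D_e(\mathfrak L^X)$ with $\mathfrak L^X\mathbf V=\mathscr L^X\mathbf V$ (settling the point left open in the proof of Proposition~\ref{pr.C3}), i.e.
\[
M_t(\mathbf V):=\mathbf V(X_t)-\mathbf V(x)-\int_0^t\mathscr L^X\mathbf V(X_s)\,ds\ \text{ is a local martingale.}
\]
By Proposition~\ref{pr.C3}, $\mathscr L^X\mathbf V(x)/\mathbf V(x)\to-\infty$ as $|x|\to+\infty$, so $\mathscr L^X\mathbf V$ is negative off a compact set and, being continuous, satisfies $\mathscr L^X\mathbf V\le C\le C\,\mathbf V$ on $\mathbb R^d$ for some $C\ge0$ (recall $\mathbf V\ge1$). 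Hence $e^{-Ct}\mathbf V(X_t)$ is a nonnegative local supermartingale, therefore a supermartingale, and optional stopping gives $\mathbb E[\mathbf V(X_{t\wedge\tau_n})]\le e^{Ct}\mathbf V(x)$. Since $\mathbf V(X_{\tau_n})\ge 2+n^{\beta\theta}$ on $\{\tau_n\le t\}$ for $n$ large (as $|X_{\tau_n}|\ge n$ by right-continuity), we obtain $(2+n^{\beta\theta})\,\mathbb P(\tau_n\le t)\le e^{Ct}\mathbf V(x)$, so $\mathbb P(\tau_n\le t)\to0$ as $n\to\infty$ for every $t>0$; thus $\tau_\infty=+\infty$ a.s. (this is Khasminskii's non-explosion test). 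The solution is then global and strong Markov, and $(X_t)_{t\ge0}\in\mathbb D(\mathbb R_+,\mathbb R^d)$ a.s. because $t\mapsto\int_0^t\nabla U(X_s)\,ds$ is continuous while $(L^\alpha_t)$ is c\`adl\`ag.

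The local well-posedness and the non-explosion test are routine; the one point requiring genuine care is the validity of Dynkin's formula for the unbounded test function $\mathbf V$, namely the absolute convergence of the large-jump compensator of $\mathbf V(X_\cdot)$. This is exactly why $\theta$ is taken small enough in~\eqref{eq.a1} that $\mathbf V$ grows only like $|x|^{\beta\theta}$ with $\beta\theta<\alpha$, rendering that quantity $F_\alpha$-integrable; once this is secured, the estimates already established in Proposition~\ref{pr.C3} supply everything else.
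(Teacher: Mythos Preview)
Your proof is correct and follows essentially the same Khasminskii-type non-explosion argument as the paper, using the Lyapunov function $\mathbf V$ of Proposition~\ref{pr.C3} together with the bound $\mathscr L^X\mathbf V\le C\mathbf V$ to control $\mathbb P_x[\tau_n\le t]$. The only differences are cosmetic: you localize with exit times from balls and invoke the supermartingale property of $e^{-Ct}\mathbf V(X_t)$, whereas the paper localizes with the sublevel sets $\{\mathbf V<R\}$, shows the stopped compensated Poisson integral is a genuine martingale via an $L^2$ estimate, and then applies Gronwall to reach the same conclusion~\eqref{eq.tauR}.
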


 \begin{proof} The proof  leading  Equation  \eqref{eq.tauR} below is rather standard. We do it for sake of completeness. 
  The computations carried out in the proof of Proposition \ref{pr.C3}  shows that  for any $t\ge0$, $\int_0^t|\mathscr L^X\mathbf V(x_s)|ds$ is a.s. finite for any càdlàg process $(x_s,s\ge 0)$ (as such a process is a.s. bounded over $[0,t]$). Note also that since the number of  jumps  is at most countable, it holds a.s.  $\int_0^t \mathscr L^X\mathbf V(x_{s^-})ds=\int_0^t\mathscr L^X\mathbf V(x_s) ds$.
Let   $c_1>0$   such that $\mathscr L^X\mathbf V \le c_1\mathbf V $ over $\mathbb R^d$. Set for $R\ge 0$, $\sigma_R:=\inf\{t\ge 0, \mathbf V(X_t)\ge R\}$. Note that  $  \mathscr V_R:=\inf\{t\ge 0,  X_t \notin  \mathscr V_R\}$ where $  \mathscr V_R=  \{x\in \mathbb R^d, \mathbf V(x)<R\}$
 is an open bounded (say by $c_R>0$) subset of  $\mathbb R^d$.  Consider the unique strong solution of \eqref{eq.Levy} up to time $\sigma_R$ and set $\sigma_\infty:=\lim_{R\to +\infty} \sigma_R=\sup_{R>0}\sigma_R$ which is  its lifetime. Let us prove that $\mathbb P_x[\sigma_\infty=+\infty]=1$ for all $x\in \mathbb R^d$.
For $x\in    \mathscr V_R$, using Itô formula~\cite[Theorem 4.4.7]{applebaum2009levy} on the interval $[0, t\wedge\sigma_R(x)]$,  one has:
\begin{align*}
M_{t\wedge\sigma_R}^\mathbf V(x)  &:=\mathbf V(X_{t\wedge \sigma_R}(x))-\mathbf V(x)- \int_0^{t\wedge \sigma_R} \mathscr L^X\mathbf V(X_{s^-}(x)) ds\\
&= \int_0^{t\wedge \sigma_R} \int_{\mathbb R^d} [\mathbf V(X_{s^-}(x)+z)-\mathbf V(X_{s^-}(x))] \tilde N_\alpha(ds,dz)\\
&= \int_0^{t}  \int_{\mathbb R^d} \mathbf 1_{s< \sigma_R}[\mathbf V(X_{s^-}(x)+z)-\mathbf V(X_{s^-}(x))] \tilde N_\alpha(ds,dz),
 \end{align*}
 where $\tilde N_\alpha$ is the compensated Poisson random measure of $(L^\alpha_t,t\ge 0)$.
Since $\mathbf V(y)\le L(2+|y|^{\beta \theta})$ and $2\beta\theta<1$ (see \eqref{eq.a1}), we have for all $Y\in \mathbb R^d$,
\begin{align*}
&\int_{|z|>1}  \mathbf 1_{\mathbf V(Y)\le R }\,  |\mathbf V(Y+z)-\mathbf V(Y)|^2\,  F_\alpha (dz)\\
&\le 2 \int_{|z|>1}   \mathbf 1_{\mathbf V(Y)\le R } \mathbf V^2(Y+z)\, F_\alpha (dz) + 2 R^2 \int_{|z|>1}\,  F_\alpha (dz)\\
&\le 2L^2 \int_{|z|>1}  [4+ 2c_R^{2\beta \theta} +2|z|^{2\beta \theta}] F_\alpha (dz)+ 2R^2  \int_{|z|>1} \, F_\alpha (dz).
 \end{align*}
Moreover,  since $|\nabla \mathbf V|$ is   bounded over $\mathbb R^d$, there exists $C>0$ such that for all $Y\in \mathbb R^d$,
\begin{align*}\int_{|z|\le1} \!\!\mathbf 1_{\mathbf V(Y)\le R }|\mathbf V(Y+z)-\mathbf V(Y)|^2 F_\alpha (dz)
&\le C \int_{|z|\le1} | z|^2   F_\alpha (dz)<+\infty.
 \end{align*}
Hence, because $\mathbf 1_{s< \sigma_R}\le \mathbf 1_{\mathbf V(X_{s^-})\le R }$, one deduces that  $\mathbb E_x [\int_0^{t}  \int_{\mathbb R^d} \mathbf 1_{s< \sigma_R}|\mathbf V(X_{s^-}+z)-\mathbf V(X_{s^-})|^2 F_\alpha (dz)ds ]<+\infty$.
Consequently,  for all $R>0$, the process  $(M_{t\wedge\sigma_R}^\mathbf V(x),t\ge 0)$ is a martingale.
Thus,
\begin{align*}
\mathbb E_{x}[\mathbf V(X_{t\wedge\sigma_R})]&\le \mathbf V(x) + \mathbb E_x\Big[\int_0^{t\wedge \sigma_R} \mathscr L^X\mathbf V(X_{s^-}) ds\Big] \\
&\le \mathbf V(x) + c_1\, \mathbb E_x\Big[\int_0^{t\wedge \sigma_R}  \mathbf V(X_{s^-}) ds\Big] \\
&=\mathbf V(x) +c_1\, \mathbb E_x\Big[\int_0^{t\wedge \sigma_R}  \mathbf V(X_{s}) ds\Big]  \le\mathbf V(x) +c_1\, \mathbb E_x\Big[\int_0^{t}  \mathbf V(X_{s\wedge \sigma_R}) ds\Big].
\end{align*}
By Grönwall's inequality~\cite[Theorem~5.1 in Appendixes]{EK}, we   have   $\mathbb E_{x}[\mathbf 1_{\sigma_R\le t} \mathbf V(X_{t\wedge\sigma_R})]\le \mathbb E_{x}[\mathbf V(X_{t\wedge\sigma_R})]\le\mathbf V(x)  e^{c_1t}$. Since $\mathbf V(X_{\sigma_R}) \ge R$, it then holds \begin{equation}\label{eq.tauR}
\mathbb P_x[\sigma_R\le t]\le \frac{e^{c_1t} }{R} \,  \mathbf V(x), \ \forall t\ge 0.
\end{equation}
This proves that $\sigma_\infty(x)$ is a.s. infinite. Note also that  we have proved that  $\mathbf V\in \mathbb D_e(\mathfrak L^X)$ and hence  $\mathbf W\in \mathbb D_e(\mathfrak L^X)$    (see~\cite[Proposition 5.1]{guillinqsd}).  The strong Markov property follows from   the (pathwise) uniqueness by standard considerations.
 \end{proof}

 We also mention~\cite[Theorem 3.1]{ZhangSIMA} for existence and uniqueness of  solutions to $\alpha$-stable driven stochastic differential equations in a similar setting where the analysis   relies  there on the fact that a rotationally invariant $\alpha$-stable process  is (in law) a subordinated Brownian motion. We also refer to~\cite{kulik2009} where the exponential ergodicity of a Markov process defined as the solution to a SDE with jump noise is investigated.

In the following, we denote by $(P_t,t\ge 0)$ the semigroup of the process $(X_t,t\ge 0)$, where we recall that $P_tf(x)=\mathbb E_x[f(X_t)]$,
 for all $f\in b\mathcal B (\mathbb R^d)$.


  \begin{prop}\label{pr.SF}
  For all $t>0$,   $P_t$ is  strong Feller.
  \end{prop}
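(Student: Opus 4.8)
The goal is to show that the transition semigroup $P_t$ of the solution $(X_t,t\ge 0)$ to the $\alpha$-stable driven SDE \eqref{eq.Levy} is strong Feller for every $t>0$, i.e.\ that $P_tf$ is continuous on $\mathbb R^d$ for every $f\in b\mathcal B(\mathbb R^d)$. The plan is to exploit the fact that a rotationally invariant $\alpha$-stable process is, in law, a Brownian motion subordinated by an $\alpha/2$-stable subordinator, together with the ellipticity (full-support) of the driving noise, to produce a density for $P_t(x,\cdot)$ depending continuously on $x$. More precisely, I would first decompose the noise $L^\alpha$ at a fixed small jump-size threshold: write $L^\alpha_t = L^{\alpha,\le\rho}_t + L^{\alpha,>\rho}_t$, where $L^{\alpha,>\rho}$ is a compound Poisson process carrying the large jumps and $L^{\alpha,\le\rho}$ is the (square-integrable, infinite-activity) small-jump part. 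On the complement of the first large jump, the process is driven only by $L^{\alpha,\le\rho}$; conditioning on the number and times of the large jumps and on the driving of the small-jump part between them reduces the problem, via the Markov property, to the strong Feller property of the flow driven by $L^{\alpha,\le\rho}$ alone.

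\medskip
\noindent
For the small-jump-driven flow I would invoke the subordination representation: $L^{\alpha,\le\rho}$ (or rather $L^\alpha$ itself) is $B_{S_t}$ for a standard $d$-dimensional Brownian motion $B$ and an independent $\alpha/2$-stable subordinator $(S_t)$. Conditioning on the subordinator path, the SDE \eqref{eq.Levy} becomes a time-changed SDE driven by the nondegenerate Brownian motion $B$, for which the standard parabolic/Hörmander or Bismut--Elworthy--Li machinery gives a transition density that is smooth in the space variable and depends continuously (indeed smoothly) on the initial point, with Gaussian-type bounds that are integrable against the law of $S_t$ on $\{S_t>0\}$ — and $S_t>0$ a.s.\ for $t>0$. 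Integrating out the subordinator (dominated convergence, using the moment bound \eqref{eq.tauR} from Corollary~\ref{co.Ex} to control the drift contribution and to justify exchanging limits) yields that $x\mapsto P_t f(x) = \mathbb E_x[f(X_t)]$ is continuous for bounded measurable $f$. An alternative, essentially equivalent route is to cite directly the results of \cite{ZhangSIMA} (quoted just above the statement) which establish exactly such density estimates and the strong Feller property for $\alpha$-stable driven SDEs with a drift satisfying a one-sided growth/dissipativity condition like \eqref{eq.Uh}; in that case the proof reduces to checking that the hypotheses there are met under \eqref{eq.Uh}, which follows from Proposition~\ref{pr.C3} and Corollary~\ref{co.Ex}.

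\medskip
\noindent
The main obstacle is the non-globally-Lipschitz, superlinearly growing drift $-\nabla U$: the clean density estimates in the literature are usually stated for bounded or Lipschitz drift, so one cannot apply them off the shelf. I would handle this by a localization/interlacing argument: on the event that $X$ stays in a ball $B(0,R)$ up to time $t$, one may replace $-\nabla U$ by a bounded Lipschitz truncation $b_R$ agreeing with it on $B(0,R)$, for which strong Feller holds; then use the exit-time tail bound \eqref{eq.tauR}, namely $\mathbb P_x[\sigma_R\le t]\le e^{c_1 t}\mathbf V(x)/R$, to show that the error from the truncation is small uniformly for $x$ in a compact set as $R\to\infty$, and conclude continuity of the uniform limit $P_tf$. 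The other technical point — that the subordinator contributes a density (i.e.\ $S_t>0$) and that the resulting Gaussian bounds integrate against the $\alpha/2$-stable law — is standard once the interlacing reduction is in place. So the real work is the quantitative truncation step, and the dissipativity \eqref{eq.Uh} together with the Lyapunov estimate of Proposition~\ref{pr.C3} is exactly what makes it go through.
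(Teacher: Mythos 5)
Your core argument is the paper's argument: you localize by replacing the superlinearly growing drift $-\nabla U$ with a globally Lipschitz truncation agreeing with it on a large ball (the paper uses the sublevel set $\mathscr V_R$ of $\mathbf V$, which is the same thing), invoke a known strong Feller result for the truncated SDE, and then remove the truncation by showing the discrepancy is controlled uniformly on compacts via the exit-time tail bound \eqref{eq.tauR}. That last step is precisely what the paper does, and you correctly identify it as the real work and \eqref{eq.tauR} as the estimate that makes it go through. For the truncated case the paper simply cites \cite[Theorem 1.1]{zhang2013derivative} together with the approximation lemma \cite[Lemma 7.1.5]{da1996ergodicity}; your subordination-plus-Bismut--Elworthy--Li route is in effect a reconstruction of what that reference does internally, and your alternative ``cite Zhang'' is also what the paper does (though the relevant reference is \cite{zhang2013derivative}, not \cite{ZhangSIMA}). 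The one piece of your plan that the paper avoids, and that you do not actually need, is the interlacing decomposition $L^\alpha = L^{\alpha,\le\rho} + L^{\alpha,>\rho}$: since $\alpha$-stability and hence the subordinated-Brownian-motion representation holds for $L^\alpha$ itself but \emph{not} for its small-jump part $L^{\alpha,\le\rho}$ (you flag this yourself with ``or rather $L^\alpha$ itself''), conditioning on large jumps would force you to prove strong Feller for an SDE driven by a non-stable Lévy noise, which is strictly harder than applying Zhang's result to the full $\alpha$-stable noise directly. Dropping the interlacing step and going straight to the truncation-plus-Zhang argument gives a cleaner proof, which is exactly the paper's.
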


\begin{proof}
 Let $R>0$ and  let $ U_R:\mathbb R^d\to \mathbb R$ be a $\mathcal C^2$ function  such that $U_R=  U$ on the closure of $ \mathscr V_R$ and such that all its derivatives of order less than $2$ are bounded. Let $(X^R_s,s\ge 0)$ be the solution of $dX^R_t= -\nabla U_R(X^R_t)dt + dL_t^\alpha$. For all $R>0$, by~\cite[Theorem 1.1]{zhang2013derivative} and standard approximation theorem~\cite[Lemma 7.1.5]{da1996ergodicity}, $x\in \mathbb R^d\mapsto \mathbb E_x[f(X_t^R)]$ is continuous for all $t>0$ and $f\in b\mathcal B(\mathbb R^d)$.  Let $x_n\to x\in \mathbb R^d$. Consider a compact set $K$ and  $R_0>0$ such that $x_n,x\in K\subset \mathscr V_{R_0}$ for all $n\ge 1$.
 Since $(X_s,s\ge 0)$ and $(X^R_s,s\ge 0)$ coincides before their first exit time from $\mathscr V_R$, we have  for all $f\in b\mathcal B(\mathbb R^d)$, $t\ge 0$, $R\ge R_0$, and $n\ge 1$,
\begin{align*}
&\big|\mathbb E_x[f(X_t)]-\mathbb E_{x_n}[f(X_t)]\big|\\
 &\le  \big|\mathbb E\big [[f(X^R_t(x))-f(X^R_t(x_n))]\mathbf 1_{t< \sigma_R(x)\wedge  \sigma_R(x_n)}\big ]\big|+ 2\Vert f\Vert \, \mathbb P[\sigma_R(x)\wedge  \sigma_R(x_n)\le t]\\
 &\le  \big|\mathbb E\big [[f(X^R_t(x))-f(X^R_t(x_n))\big ]\big|+ 4\Vert f\Vert \, \mathbb P[\sigma_R(x)\wedge  \sigma_R(x_n)\le t].
 \end{align*}
 Note that $\sup_{n\ge 1}\mathbb P[\sigma_R(x)\wedge  \sigma_R(x_n)\le t]\le \mathbb P[\sigma_R(x) \le t]+\sup_{n\ge 1}\mathbb P[   \sigma_R(x_n)\le t]\le 2 e^{c_1t}\sup_{K}\mathbf V /R\to 0 $ as $R\to +\infty$ by \eqref{eq.tauR}. Hence $|\mathbb E_x[f(X_t)]-\mathbb E_{x_n}[f(X_t)]|\to 0$ as $n\to +\infty$, the desired result.
\end{proof}

Let us mention that the  strong Feller property of solutions to L\'evy driven stochastic differential equation has been extensively   investigated in the literature, see e.g.~\cite{picard1996existence,priola2012exponential,kusuoka2014smoothing,dong2016strong,zhang2017fundamental,xi2019jump,xi2019jump} and references therein.

 \subsubsection{On Assumption {\rm\textbf{(C2)}}}

 \begin{prop}\label{pr.TF}    The process $(X_t,t\ge 0)$ solution to \eqref{eq.Levy} satisfies {\rm\textbf{(C2)}}.
  \end{prop}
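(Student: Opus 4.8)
The plan is to establish that the map $x\mapsto \mathbb P_x(X_{[0,T]}\in\cdot)$ is continuous from $\mathbb R^d$ into $\mathcal P(\mathbb D([0,T],\mathbb R^d))$ with the weak topology by combining a localization argument (as in the proof of Proposition \ref{pr.SF}) with the pathwise stability of the SDE on a fixed bounded domain. First I would fix $T>0$ and a convergent sequence $x_n\to x$ in $\mathbb R^d$, and pick $R_0>0$ with all $x_n,x$ lying in $\mathscr V_{R_0}$; as before, let $U_R$ be a $\mathcal C^2$ modification of $U$ that is globally Lipschitz with bounded second derivatives and coincides with $U$ on $\overline{\mathscr V_R}$, and let $(X^R_t(y),t\ge 0)$ solve the SDE with drift $-\nabla U_R$ and the \emph{same} driving noise $L^\alpha$. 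Since $X$ and $X^R$ agree up to the exit time $\sigma_R$, and by \eqref{eq.tauR} $\sup_{n}\mathbb P[\sigma_R(x)\wedge\sigma_R(x_n)\le T]\to 0$ as $R\to\infty$, it suffices to prove that for each fixed $R\ge R_0$ the law of the path $X^R_{[0,T]}(x_n)$ converges weakly in $\mathbb D([0,T],\mathbb R^d)$ to that of $X^R_{[0,T]}(x)$.

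For the globally Lipschitz drift SDE $dX^R_t=-\nabla U_R(X^R_t)dt+dL^\alpha_t$ driven by a fixed Lévy process, I would invoke the classical pathwise stability estimate: writing the difference $X^R_t(x_n)-X^R_t(x)$ and using the Lipschitz bound on $\nabla U_R$ together with Grönwall's inequality (pathwise, since the noise term cancels identically as the driving process is shared), one gets
\[
\sup_{t\in[0,T]}|X^R_t(x_n)-X^R_t(x)|\le e^{\mathrm{Lip}(\nabla U_R)\,T}\,|x_n-x|\xrightarrow[n\to\infty]{}0\quad\text{a.s.}
\]
Hence $X^R_{[0,T]}(x_n)\to X^R_{[0,T]}(x)$ a.s. in the uniform (hence Skorokhod) topology on $\mathbb D([0,T],\mathbb R^d)$, which gives weak convergence of the laws, and in fact convergence in probability of the paths. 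This is clean because we never compare different realizations of the noise.

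Then I would assemble the pieces: for any bounded continuous $\Phi:\mathbb D([0,T],\mathbb R^d)\to\mathbb R$,
\[
\big|\mathbb E_x\Phi(X_{[0,T]})-\mathbb E_{x_n}\Phi(X_{[0,T]})\big|\le \big|\mathbb E\,[\Phi(X^R_{[0,T]}(x))-\Phi(X^R_{[0,T]}(x_n))]\big|+4\|\Phi\|_\infty\,\mathbb P[\sigma_R(x)\wedge\sigma_R(x_n)\le T],
\]
let $n\to\infty$ (first term vanishes by the a.s. convergence above and bounded convergence), then let $R\to\infty$ (second term vanishes uniformly in $n$ by \eqref{eq.tauR}); this yields $\mathbb E_{x_n}\Phi(X_{[0,T]})\to\mathbb E_x\Phi(X_{[0,T]})$, i.e.\ \textbf{(C2)}. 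The main obstacle, and the only nonroutine point, is justifying that one may use a single driving noise for $X^R(x_n)$ and $X^R(x)$ and that the modified equation with globally Lipschitz coefficients is pathwise well-posed with the Grönwall stability estimate — but this is exactly the standard strong-solution theory for Lévy-driven SDEs with Lipschitz drift (see e.g.\ the references already cited, \cite{applebaum2009levy}), together with the localization device already used for Proposition \ref{pr.SF}. No new estimate beyond \eqref{eq.tauR} is needed.
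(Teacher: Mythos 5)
Your proof is correct and follows essentially the same route as the paper: localize to the bounded sub-level set $\mathscr V_R$, use the Grönwall/Lipschitz stability of the drift on that set (the noise being shared and cancelling), and then control the escape probability uniformly in $n$ via \eqref{eq.tauR}. The only cosmetic difference is that the paper applies Grönwall directly to $X$ on the event $\{T<\sigma_R(x)\wedge\sigma_R(x_n)\}$ and proves convergence in probability of the paths before concluding weak convergence, whereas you pass through the auxiliary globally-Lipschitz process $X^R$ (as in the strong-Feller proof) and test directly against bounded continuous functionals; both are equivalent.
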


 \begin{proof}
 Let $T>0$ be fixed.
Recall that for  $r>0$,  $  \mathscr V_R=  \{x\in \mathbb R^d, \mathbf V(x)<R\}$
 is an open bounded subset of  $\mathbb R^d$  and  $\sigma_R= \inf\{t\ge 0, X_t \notin \mathscr V_R\}$.
Let $x_n\to x\in \mathbb R^d$.   One can assume that for some $R_0>0$,  $x_n,x\in  \mathscr V_R$ for all $R\ge R_0$ and $n\ge 0$. In the following we assume that $R\ge  R_0$. By Gronwall Lemma~\cite[Theorem~5.1 in Appendixes]{EK} we have for all $R\ge 0$, when $T<\sigma_R(x)\wedge \sigma_R(x_n)$,  $\sup_{s\in [0,T]}| X_t(x)-X_t(x_n)|\le |x-x_n|e^{b_R T}$,
where $b_R:= \sup_{y\in \mathscr H_R} |\text{Hess }U(y)|$. Thus,  one has for all $\epsilon>0$ and all $R>0$ fixed, as $n\to \infty$,
$$\mathbf  P_n(R)=\mathbb P\Big [\sup_{s\in [0,T]}| X_t(x)-X_t(x_n)| \ge \epsilon, T< \sigma_R(x)\wedge \sigma_R(x_n)\Big]\to 0.$$
Consequently, it holds for any $\epsilon >0$:
\begin{align*}
 \mathbb P\big [\sup_{s\in [0,T]}| X_t(x)-X_t(x_n)| \ge \epsilon\big] &\le \mathbf  P_n(R) + \mathbb P\big [T\ge  \sigma_R(x)\wedge \sigma_R(x_n)]\\
&\le \mathbf  P_n(R) + \mathbb P\big [T\ge  \sigma_R(x) ]+ \mathbb P\big [T\ge   \sigma_R(x_n)]\\
&\le \mathbf  P_n(R)+ 2R_0\frac{e^{c_1T} }{R},
\end{align*}
where we have used \eqref{eq.tauR} to get the last inequality and the fact that $\mathbf V(x_n)< R_0$ and $\mathbf V(x)< R_0$. Let us now consider $\delta>0$. Pick $R_\delta\ge R_0$ such that $2R_0 {e^{c_1T} }/{R_\delta}\le \delta/2$. For this fix $R_\delta>0$, $\mathbf  P_n(R_\delta)\to 0$ as $n\to +\infty$, and thus, there exists $N_\delta\ge 1$ such that  for all $n\ge N_\delta$,  $\mathbf  P_n(R_\delta)\le \delta/2$. Therefore, one has that  $\mathbb P  [\sup_{s\in [0,T]}| X_t(x)-X_t(y)| \ge \epsilon ] \le \delta$ for all $n\ge N_\delta$.
We have thus proved that $X_{[0,T]}(x_n)\to X_{[0,T]}(x)$ in probability as $n\to +\infty$ for the supremum norm  over  $[0,T]$. Thus  $X_{[0,T]}(x_n)\to X_{[0,T]}(x)$  in probability  as $n\to +\infty$ also for the distance of $ \mathbb D([0,T],\mathbb R^d)$, see e.g.~\cite[Section 12]{billingsley2013}.
 Therefore,  $\mathbb P_{  x_n}[X_{[0,T]}\in \cdot]$  converges weakly to $\mathbb P_{  x}[X_{[0,T]}\in \cdot]$ in
 $\mathcal P(\mathbb D([0,T],\mathbb R^d))$, which is precisely   \textbf{(C2)}. The proof is thus complete.
 \end{proof}

\subsubsection{On Assumption {\rm\textbf{(C5)}}}

 Let $ \mathscr  D$ be an open subset   of $\mathbb R^d$. Recall that    $(P_t^{ \mathscr D},t\ge 0)$ denotes the semigroup of the killed process $(X_t,t\ge 0)$, i.e.  $P_t^{ \mathscr D}f(x)=\mathbb E_x[f(X_t)\mathbf 1_{t<\sigma_{  \mathscr D}}]$,
 for all $f\in b\mathcal B ( \mathscr D)$ (see \eqref{killedsg}).
We have the following result.
 \begin{prop}\label{pr.Topo}
 Let $ \mathscr  D$ be a  subdomain of $\mathbb R^d$. 
 Let  $T>0$ and $x,z\in \mathscr D$. Then,  for all $\epsilon>0$ and $T>0$,
 \begin{equation}\label{eq.Topo}
 P_T^{ \mathscr D}(x, B(z,\epsilon))=\mathbb P_x[|X_T-z|<\epsilon, T<\tau_{\mathscr D}]>0.
 \end{equation}
 In addition, if $\mathbb R^d\setminus \overline{\mathscr D}$ is nonempty, then for all $y\in \mathscr D$, $\mathbb P_y[\sigma_{\mathscr D}<+\infty ]>0$.
 Thus, Assumption {\rm\textbf{(C5)}} is satisfied.
  \end{prop}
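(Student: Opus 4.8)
The plan is to establish the positivity \eqref{eq.Topo} by a support‑theorem type construction exploiting that the driving $\alpha$‑stable noise can produce jumps of arbitrary size, and then to deduce $\mathbb P_y[\sigma_{\mathscr D}<+\infty]>0$ from \eqref{eq.Topo} applied with $\mathscr D$ replaced by $\mathbb R^d$. First I would reduce \eqref{eq.Topo} to the case $\overline{B(z,\epsilon)}\subseteq\mathscr D$ by shrinking $\epsilon$. Then I would fix a small parameter $\delta>0$ (to be chosen at the very end) and decompose the driving noise as $L^\alpha_t=\xi_t+J_t$, where $\xi$ gathers the jumps of size $\le\delta$ and is a rotationally symmetric, hence $L^2$‑martingale, L\'evy process with $\mathbb E|\xi_t|^2=t\int_{|w|\le\delta}|w|^2F_\alpha(dw)\to 0$ as $\delta\to0$, and $J$ is the independent compound Poisson process of jumps of size $>\delta$, whose jump distribution has a strictly positive density on $\{|w|>\delta\}$.

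Next I would build a deterministic c\`adl\`ag skeleton $\widetilde X\colon[0,T]\to\mathscr D$ with $\widetilde X_0=x$, $\widetilde X_T$ close to $z$, staying in a fixed compact set $\mathscr K\Subset\mathscr D$, obtained from the flow of $\dot\phi=-\nabla U(\phi)$ by inserting finitely many jumps. The skeleton first follows the flow for a very short time near $x$, then makes one big jump of size $\approx z-x$ landing on $z$ (note this step already uses no connectedness of $\mathscr D$: the jump may cross between components of $\mathscr D$ without the process ever leaving $\mathscr D$), and then enters a long ``parking'' phase near $z$: it is split into short cycles, each made of a brief flow arc, a big jump onto $z+v_0$ for a fixed vector $v_0$ such that $z+v_0$ and a neighbourhood of it lie in $\mathscr D$, another brief flow arc, and a big jump back onto $z$. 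Using round‑trip jumps rather than simply letting the flow idle near $z$ is essential: the gradient flow of $U$ need not keep $\mathscr D$ invariant, so a long flow arc could leave $\mathscr D$, whereas the round trips keep $\widetilde X$ inside $\mathscr K$ throughout $[0,T]$ no matter how large $T$ is.

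I would then consider the event $E$ that $J$ makes exactly the prescribed number of jumps, in prescribed disjoint short time windows, with jump vectors in balls of radius $\kappa$ around those of $\widetilde X$, intersected with $\{\sup_{t\le T}|\xi_t|<\kappa\}$, for a small $\kappa>0$. By independence, by positivity of the jump density of $J$, and by Doob's maximal inequality (which gives $\mathbb P[\sup_{t\le T}|\xi_t|<\kappa]>0$ once $\delta$ is small enough relative to $\kappa$), one gets $\mathbb P(E)>0$. On $E$, a Gr\"onwall comparison of $X$ with $\widetilde X$ on each inter‑jump interval — using that $\nabla U$ is Lipschitz on a neighbourhood of $\mathscr K$ — yields $\sup_{t\le T}|X_t-\widetilde X_t|\le C\kappa$ with $C=C(T,U,\mathscr K)$ independent of $\delta$ and $\kappa$; choosing $\kappa$ and then $\delta$ small enough, $X$ stays in a neighbourhood of $\mathscr K$ contained in $\mathscr D$, so $T<\sigma_{\mathscr D}$ on $E$ and $|X_T-z|<\epsilon$. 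Hence $P_T^{\mathscr D}(x,B(z,\epsilon))\ge\mathbb P(E)>0$, which is \eqref{eq.Topo}. Finally, if $p\in\mathbb R^d\setminus\overline{\mathscr D}$ choose $r>0$ with $B(p,r)\cap\mathscr D=\emptyset$; \eqref{eq.Topo} applied with $\mathscr D=\mathbb R^d$ gives $\mathbb P_y[X_1\in B(p,r)]=P_1(y,B(p,r))>0$, and on $\{X_1\in B(p,r)\}$ one has $\sigma_{\mathscr D}\le1$, so $\mathbb P_y[\sigma_{\mathscr D}<+\infty]>0$; together with \eqref{eq.Topo} this yields \textbf{(C5)}.

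The main obstacle is the bookkeeping of the several small parameters so that they may be fixed in a consistent order: the size $\rho_0$ of the balls defining $\mathscr K$ and the length of the parking cycles must be kept moderate (not shrinking with $\delta$), otherwise the Gr\"onwall constant $C$ degenerates; then $\kappa$ must be small compared to $\rho_0/C$; and only then is $\delta$ taken small — small enough for Doob's estimate and small enough that every skeleton jump vector still has modulus $>\delta$ while $\mathscr K$ and the tracking neighbourhood remain inside $\mathscr D$. A shorter but less self‑contained route would be to invoke strict positivity of the Dirichlet heat kernel of the $\alpha$‑stable process with bounded drift on a bounded subdomain $\mathscr D_0\Subset\mathscr D$ containing $x$ and $z$, together with the monotonicity $P_T^{\mathscr D}\ge P_T^{\mathscr D_0}$.
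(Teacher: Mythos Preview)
Your proposal is correct and conceptually identical to the paper's proof: both build a deterministic c\`adl\`ag skeleton in a compact $\mathscr K\Subset\mathscr D$ consisting of short flow arcs of $-\nabla U$ punctuated by finitely many jumps (one large jump carrying the path from a neighbourhood of $x$ to a neighbourhood of $z$, followed by a ``parking'' loop of repeated small cycles near $z$ to exhaust the remaining time), and then argue that the process tracks this skeleton with positive probability. The difference is only in how the tracking is obtained: the paper outsources it to Kulyk's support theorem for L\'evy-driven SDEs \cite[Theorem~2.1]{kulyk22} after replacing $-\nabla U$ by a globally Lipschitz field agreeing with it on a bounded $\mathscr O\supset\mathscr K$, whereas you reprove the relevant special case from scratch via the decomposition $L^\alpha=\xi+J$, Doob's $L^2$ maximal inequality for $\xi$, and a piecewise Gr\"onwall estimate. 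Your route is more self-contained; the paper's is shorter and avoids the parameter bookkeeping you flag.

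One small imprecision: since on your event $E$ the $J$-jumps occur in short time \emph{windows} rather than at the exact skeleton jump times, within such a window one of $X,\widetilde X$ has jumped and the other has not, so the literal claim $\sup_{t\le T}|X_t-\widetilde X_t|\le C\kappa$ fails by a full jump size. What does hold, and is all you need, is that $X_t$ stays in a $C(\kappa+\eta)$-neighbourhood of $\{\widetilde X_{t^-},\widetilde X_t\}\subset\mathscr K$ (with $\eta$ the window width) for all $t\le T$, and that $|X_T-\widetilde X_T|\le C(\kappa+\eta)$ since no jump window straddles $T$. With this adjustment the conclusion $\{T<\sigma_{\mathscr D},\ |X_T-z|<\epsilon\}\supset E$ goes through. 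Your derivation of $\mathbb P_y[\sigma_{\mathscr D}<+\infty]>0$ from the case $\mathscr D=\mathbb R^d$ is exactly what the paper's ``same analysis'' remark intends.
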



\begin{proof}
Fix $T>0$ and $\epsilon>0$.  Let $\mathscr  O$ be a   bounded  subdomain of $\mathscr D$ with closure included in $\mathscr D$ and  such that $x,z\in \mathscr O$.
The proof is divided into   three steps.
\medskip

\noindent
\textbf{Step A.} Preliminary analysis.
    Let $\mathbf c:\mathbb R^d\to \mathbb R^d$ be a globally Lipschitz vector field  such that $\mathbf c=-\nabla U$ on the closure of $ \mathscr  O$. Let $(X^\star_s,s\ge 0)$ be the solution of $dX^\star_t= \mathbf c(X^\star_t)dt + dL_t^\alpha$.
 Since $(X_s,s\ge 0)$ and $(X^\star_s,s\ge 0)$ coincides before their first exit time from $ \mathscr O$, Equation \eqref{eq.Topo} holds if
 \begin{equation}\label{eq.K}
 \mathbb P_x\big [\{|X^\star_T-z|<\epsilon\} \cap \{\overline{\rm Ran} \, X^\star_{[0,T]}\subset \mathscr  O\}\big ]>0.
 \end{equation}
Let us prove  \eqref{eq.K}.  In the rest of the proof, we adopt the notation of~\cite[Section 2.2]{kulyk22}.  Note here that  $r\equiv 0$, $b=\mathbf c$, $\sigma\equiv 1$, and $c(x,u)\equiv u$.   In view of~\cite[Theorem 2.1]{kulyk22} and \eqref{eq.K}, the goal is to construct   $\phi\in  \mathbf S^{\text{const}}_{0,T,x}$ such that
\begin{equation}\label{eq.Kulyk0}
\text{$\overline{\rm  Ran}\, \phi \subset \mathscr O$, $\phi_0=x$,  and $|\phi_T-z|<\epsilon/2$.}
\end{equation}
If such a curve $\phi$ exists, by~\cite[Theorem 2.1]{kulyk22}, it holds   for all  $\epsilon_0>0$,
\begin{equation}\label{eq.Kulyk}
 \mathbb P_x[ d_T(X^\star,\phi)<\epsilon_0]>0,
 \end{equation}
 where $d_T$ is the Skorokhod metric  of $ \mathbb D([0,T], \mathbb R^d)$, see~\cite[Section 12]{billingsley2013}. It is not difficult to construct    $\phi\in   \mathbf S^{\text{const}}_{0,T,x}$   satisfying \eqref{eq.Kulyk0} using the  simple procedure described in~\cite[Equation (7)]{kulyk22}.

\medskip

\noindent
\textbf{Step B.} Construction of the curve $\phi$.
  We will construct $\phi$ with $f_t\equiv 0$.  First note that for any $r'>0$ and $a\neq b\in \mathbb R^d$, $J(a,B(b,r'))=F(B(b-a,r'))\in (0,+\infty)$ if $r'<|b-a|$. Therefore $a\neq b\Rightarrow b\in \text{supp}(J(a,\cdot))$ (i.e. the jump from $a$ to $b$ is admissible). Define
 $$\tilde{\mathbf b}(y)=\mathbf c(y)- \mathbf a,  \text{ where $\mathbf a=-\int_{|w|\le 1}w_L F_\alpha(dw)$},$$
 and $w_L$ is the orthogonal projection of $z$ on the vector space  $L=  \{\ell \in \mathbf R^d, \int_{|w|\le 1}|w\cdot \ell| F_\alpha(dw)<+\infty\}$.

 Assume $x\neq z$ (the  case $x=z$  is treated similarly). Consider two disjoint open balls  $B(x,\epsilon')$ and $B(z,\epsilon')$ whose closures are included in $\mathscr O$ for any $\epsilon'\in (0,\epsilon'_0)$ for some $\epsilon'_0\in (0,\epsilon/4)$. Fix such a $\epsilon'>0$.
 \medskip

 \noindent
 \textit{Initialization}.
 Let $\phi$ be the solution of $\dot \phi_s=\tilde{\mathbf b}(\phi_s)$ with $\phi_0=x$.
 Choose $t_1\in (0,T)$ such that $\phi_t\in \overline B(x,\epsilon'/2)$ for all $t\in  [0,t_1)$ and $\phi_{t_1^-}\in \overline B(x,\epsilon'/2)$.
Then, pick $x_1\in B(z,\epsilon'/2)$    and set $\phi_{t_1}=x_1\neq x$ (this jump  is admissible according to the previous discussion).
 \medskip

 \noindent
 \textit{Second step}.
We let $\phi_t$  evolve again according   to the flow  $\dot u_s=\tilde{\mathbf b}(u_s)$   on $[t_1,t_2)$ with initial condition $x_1$. If $\tilde{\mathbf b}(x_1)= 0$, we stop the procedure because  $\phi_t=x_1$ for all $t\ge t_1$. Otherwise, there exists $t_2>t_1$  such that  $\phi_t\in \overline B(z,\epsilon'/2)$ for all $t\in  [t_1,t_2)$ and $x_1\neq \phi_{t_2^-}\in  \overline B(z,\epsilon'/2)$. If one can choose $t_2>T$, we stop the construction of $\phi$. Otherwise, we come back to $x_1$ setting
 $$\phi_{t_2}:=x_1$$
 and we then consider this point  as the initial value of the Cauchy problem $\dot u_s=\tilde{\mathbf b}(u_s)$ on the time interval $[t_2,t_2+t_2-t_1)$.
 \medskip

 \noindent
 \textit{Iteration}.
Then, one repeats this procedure  a finite number of times to construct $\phi$ over $[0,t_1)\cup [t_1,t_2)\cup [t_2, 2t_2-t_1)\cup [2t_2-t_1,  3t_2-2t_1)\cup\ldots [nt_2-(n-1)t_1,(n+1)t_2-nt_1)$. By choice of $\epsilon'>0$, the resulting $\phi$ has the desired properties, i.e. $\phi\in  \mathbf S^{\text{const}}_{0,T,x}$ satisfies \eqref{eq.Kulyk0}.
\medskip

\noindent
\textbf{Step C.}  End of the proof of \eqref{eq.K}.
 Assume that $\epsilon_0>0$ is small enough (say $\epsilon_0\in (0,\epsilon_\phi)$, $\epsilon_\phi\in (0,\epsilon)$) such that $ d_T(f,\phi)<\epsilon_0/2$ implies
 that $\overline{\rm Ran} \, f\subset \mathscr O$ (note  that  in particular $|f_T-z|\le |f_T-\phi_T| + |\phi_T-z|\le  d_T(f,\phi)+\epsilon/2<\epsilon$).  Then, using  \eqref{eq.Kulyk} with such a small  $\epsilon_0>0$ yields  $\mathbb P_x[\{|X^\star_T-z|<\epsilon\} \cap \{ \overline{\rm Ran}\, X^\star_{[0,T]}\subset  \mathscr O\}]>0$, which  is exactly \eqref{eq.K}.
 Therefore,  \eqref{eq.Topo} is satisfied.     The second statement in Proposition \ref{pr.Topo} is then   easy to obtain with the same analysis.
This concludes the proof of Proposition \ref{pr.Topo}.
  \end{proof}

\subsubsection{On Assumption {\rm\textbf{(C4)}}}

 \begin{lem}\label{pr.C5}
For all compact subset $K$ of $\mathbb R^d$ and $\delta>0$,
 $ \lim_{t\to 0^+} \sup_{x\in K}\mathbb P_x[\sigma_{B(x,\delta)}\le t]=0$.
\end{lem}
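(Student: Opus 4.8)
The plan is to control the exit time of $(X_t)$ from a small ball $B(x,\delta)$ uniformly for $x$ in a compact set $K$, exploiting that the drift $-\nabla U$ is bounded on a neighborhood of $K$ and that the stable part $(L^\alpha_t)$ has small fluctuations on small time intervals. First I would fix a compact $K$ and enlarge it: choose $R_0>0$ with $K\subset \mathscr V_{R_0}$ and set $K'=\overline{\mathscr V_{R_0}}$, so that $M:=\sup_{y\in K'}|\nabla U(y)|<+\infty$. Replacing $-\nabla U$ by a globally Lipschitz vector field $\mathbf c$ that agrees with $-\nabla U$ on $K'$ (as in the proof of Proposition \ref{pr.Topo}), it suffices to estimate, for the process $X^\star$ solving $dX^\star_t=\mathbf c(X^\star_t)\,dt+dL^\alpha_t$ started at $x$, the probability that it leaves $B(x,\delta)$ before time $t$; indeed $X$ and $X^\star$ coincide up to their first exit from $\mathscr V_{R_0}$, and for $\delta$ small this exit from the ball happens first.

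Next I would split the displacement as
\[
X^\star_s - x = \int_0^s \mathbf c(X^\star_u)\,du + L^\alpha_s .
\]
On the event that $X^\star$ has not yet left $B(x,\delta)\subset K'$, the drift term is bounded by $\|\mathbf c\|_{\infty}\, s \le \|\mathbf c\|_\infty t$, which is $\le \delta/2$ once $t\le \delta/(2\|\mathbf c\|_\infty)$. Hence for such $t$,
\[
\mathbb P_x\big[\sigma_{B(x,\delta)}\le t\big] \le \mathbb P_x\Big[\sup_{s\in[0,t]}|L^\alpha_s| \ge \delta/2\Big],
\]
and crucially the right-hand side does not depend on $x$ at all, by stationarity and spatial homogeneity of the increments of $L^\alpha$. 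So $\sup_{x\in K}\mathbb P_x[\sigma_{B(x,\delta)}\le t]\le \mathbb P[\sup_{s\in[0,t]}|L^\alpha_s|\ge\delta/2]$.

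It then remains to show $\mathbb P[\sup_{s\in[0,t]}|L^\alpha_s|\ge \delta/2]\to 0$ as $t\to 0^+$. Since $(L^\alpha_s)$ is càdlàg with $L^\alpha_0=0$, one has $\sup_{s\in[0,t]}|L^\alpha_s|\to 0$ a.s. as $t\to 0^+$, whence the probability tends to $0$ by dominated convergence; alternatively one can invoke stochastic continuity of Lévy processes together with a maximal/Doob-type inequality, or simply right-continuity of the filtration and the fact that $\bigcap_{t>0}\{\sup_{[0,t]}|L^\alpha|\ge\delta/2\}=\{|L^\alpha_0|\ge\delta/2\}=\emptyset$. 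Combining the two displays gives the claim. The only mildly delicate point is the bookkeeping to reduce from $X$ to $X^\star$ and to justify that the exit from $B(x,\delta)$ precedes the exit from $\mathscr V_{R_0}$ for small $\delta$; everything else is routine, and there is no real analytic obstacle here since the drift is bounded on the relevant compact region and the noise estimate is uniform in the starting point.
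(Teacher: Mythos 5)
Your argument is correct, but it takes a genuinely different route from the paper. The paper explicitly mentions both routes: it notes that one may ``study the trajectories of the process,'' as in [guillinqsd3, Lemma 2.4] --- which is what you do --- but then deliberately chooses an alternative proof based on the It\^o formula. Concretely, the paper introduces a smooth bump function $\Psi_x$ that vanishes on $B(x,\delta/2)$ and equals $1$ outside $B(x,\delta)$, shows that $M_t^{\Psi_x}$ is a martingale, applies optional stopping at $t\wedge\sigma_{B(x,\delta)}$, and uses that $\mathscr L^X \Psi_x$ is uniformly bounded on $K\times K_\delta$ to obtain the explicit linear bound $\mathbb P_x[\sigma_{B(x,\delta)}\le t]\le t\,\sup_{x\in K,\,z\in K_\delta}|\mathscr L^X\Psi_x(z)|$. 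By contrast, you decompose $X^\star_s-x$ pathwise into the drift integral (bounded on the compact region where the path is confined) plus the L\'evy driving term, reduce to the statement $\mathbb P[\sup_{s\le t}|L^\alpha_s|\ge\delta/2]\to 0$, and conclude by right-continuity of the noise at $0$; your estimate is uniform in $x$ because the law of $L^\alpha$ does not depend on the starting point. Both approaches are sound. The paper's is more self-contained within its own machinery and yields a quantitative $O(t)$ rate; yours is more elementary (no martingale/It\^o input, just boundedness of the drift on compacts and stochastic continuity of $L^\alpha$) and isolates cleanly what the jump noise must do.

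Two bookkeeping remarks, neither fatal. First, the reduction to $X^\star$ requires $B(x,\delta)\subset\mathscr V_{R_0}$ for \emph{all} $x\in K$; since $\delta>0$ is fixed and arbitrary in the lemma, you should choose $R_0$ large enough that the closed $\delta$-neighborhood $K_\delta$ is contained in $\mathscr V_{R_0}$, rather than saying ``for $\delta$ small this exit happens first.'' Second, $\|\mathbf c\|_\infty$ should be read as $\sup_{y\in K'}|\mathbf c(y)|=\sup_{y\in K'}|\nabla U(y)|$: a globally Lipschitz vector field need not be globally bounded, but you only evaluate $\mathbf c$ at points of $K'$ on the event where the path has not yet exited $B(x,\delta)$, so the compact sup is the correct (and finite) quantity. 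You should also make explicit the one-line argument at the exit time: at $\sigma:=\sigma_{B(x,\delta)}$, by c\`adl\`ag one has $|X^\star_\sigma-x|\ge\delta$, while $|\int_0^\sigma\mathbf c(X^\star_u)\,du|\le(\sup_{K'}|\mathbf c|)\,t\le\delta/2$, hence $|L^\alpha_\sigma|\ge\delta/2$ with $\sigma\le t$; this is what justifies the displayed inequality.
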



One way to prove  Lemma \ref{pr.C5} is to study    the trajectories of the process \eqref{eq.Levy}, as this done in the proof of \cite[Lemma 2.4]{guillinqsd3}.  We  give  here another proof based on the Itô formula which is  inspired from the computations leading to \eqref{eq.tauR}.

\begin{proof}
Let $K$ be a compact subset of $\mathbb R^d$.
Let $\Psi:\mathbb R^d\to [0,1]$ be a smooth function such that  $\Psi=0$ on $B(0,\rho/2)$ and $\Psi=1$ on $B^c(0,\rho)$.  Note that $\Psi$, $\nabla \Psi$ and ${\rm Hess}\, \Psi$ are uniformly bounded over $\mathbb R^d$. Set  $\Psi_x(z)= \Psi(z-x)$.
Let $K_\delta$ be the (closed)  $\delta$-neighborhood of $K$.
In addition, for any $x,z\in \mathbb R^d$,
\begin{align*}
|\mathscr L^X\Psi_x(z)|&\le |\nabla U(z)\cdot \nabla_z\Psi_x(z)|+ 2\int_{|y|>1}F_\alpha(dy)+ \sup_{w\in \mathbb R^d} |{\rm Hess}_w\, \Psi_x(w)|\, \int_{|y|\le 1}|y|^2F_\alpha(dy)\\
&\le |\nabla U(z)\cdot \nabla_z\Psi_x(z)| + C_0,
\end{align*}
for some $C_0>0$ independent of $x,z\in \mathbb R^d$.
Since for $x\in K$, $\nabla_z\Psi_x(z)=0$  for all $z\notin K_\delta$,  we deduce that $\sup_{x\in K, z\in K_\delta}|\mathscr L^X\Psi_x(z)|<+\infty$.
On the other hand,   by Itô formula, the  process   $(M_{t}^{\Psi_x}(x),t\ge 0)\text{ is a martingale}$,
where $M_{t}^{\Psi_x}(x):=\Psi_x(X_{t }(x))-\Psi_x(x)- \int_0^{t } \mathscr L^X\Psi_x(X_{s^-}(x)) ds$.  In particular $\Psi_x\in \mathbb D_e(\mathfrak L^X)$ and $\mathfrak L^X\Psi_x=\mathscr L^X\Psi_x$. Thus, since in addition $\Psi_x(x)=0$, we have using the optional stopping theorem,
\begin{align*}
\mathbb E_{x}[\Psi_x(X_{t\wedge\sigma_{B(x,\delta)}})]&\le   \mathbb E_x\Big[\int_0^{t\wedge \sigma_{B(x,\delta)}} \mathscr L^X\Psi_x(X_{s^-}(x)) ds\Big]  \\
&\le t \sup_{x\in K, z\in K_\delta}|\mathscr L^X\Psi_x(z)|.
\end{align*}
Notice that we have used above that  when $x\in K$ and $s<\sigma_{B(x,\delta)}$, $X_{s^-}(x)\in K_\delta$ and thus  for all $x\in K$ and $s<\sigma_{B(x,\delta)}$, $|\mathscr L^X\Psi_x(X_{s^-}(x))|\le \sup_{x\in K, z\in K_\delta}|\mathscr L^X\Psi_x(z)|$.
Note that $|X_{ \sigma_{B(x,\delta)}}(x)-x|\ge \delta$.
Hence,  $\Psi_x(X_{ \sigma_{B(x,\delta)}}(x))=1$ and
$$ \mathbb P_x[\sigma_{B(x,\delta)}\le t]= \mathbb E_{x}[\mathbf 1_{\sigma_{B(x,\delta)}\le t} \Psi_x(X_{ \sigma_{B(x,\delta)}})]\le t \sup_{x\in K, z\in K_\delta}|\mathscr L^X\Psi_x(z)|.$$
This ends the proof of the lemma.
\end{proof}

Using \textbf{(C1)}, Lemma \ref{pr.C5}, and the same arguments as those used to prove~\cite[Theorem~2.2]{chung2001brownian}, we deduce the following result.
 \begin{cor}\label{co.SFD}
 Let $ \mathscr D$ be any non empty open subset  of $\mathbb R^d$ and $t>0$.
Then, for any $t>0$, $P_t^{\mathscr  D}$  is strongly Feller. In particular {\rm\textbf{(C4)}} holds.
 \end{cor}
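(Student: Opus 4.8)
The plan is to prove the strong Feller property of $P_t^{\mathscr D}$ by the classical Chung-type scheme: show that for every $f\in b\mathcal B(\mathscr D)$ (extended by $0$ on $\mathscr S\setminus\mathscr D$) and every $x_0\in\mathscr D$, the function $P_t^{\mathscr D}f$ is continuous at $x_0$. Once this is done, \textbf{(C4)} is immediate: $P_t^{\mathscr D}$ then maps $b\mathcal B(\mathscr D)$ into $\mathcal C_b(\mathscr D)$, and $\mathcal C_b(\mathscr D)$ is a measure-separable class on the metric space $\mathscr D$, so one may simply take $\mathcal C=\mathcal C_b(\mathscr D)$ in \textbf{(C4)}.

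First I would fix $x_0\in\mathscr D$ and choose $\delta>0$ with $\overline{B(x_0,2\delta)}\subset\mathscr D$. For an intermediate time $s\in(0,t)$, set $g:=\mathbf 1_{\mathscr D}\,P_{t-s}^{\mathscr D}f\in b\mathcal B(\mathscr S)$, so that $\|g\|\le\|f\|$ and $g$ vanishes on $\mathscr D^c$. By the semigroup property $P_t^{\mathscr D}f=P_s^{\mathscr D}(P_{t-s}^{\mathscr D}f)$, and decomposing $P_sg(x)=\mathbb E_x[g(X_s)]$ according to whether $s<\sigma_{\mathscr D}$ or $s\ge\sigma_{\mathscr D}$ — on the first event $X_s\in\mathscr D$ and one recognizes $P_s^{\mathscr D}(P_{t-s}^{\mathscr D}f)(x)=P_t^{\mathscr D}f(x)$, while on the second the strong Markov property at $\sigma_{\mathscr D}$ applies — one obtains, for every $x\in\mathscr D$,
$$
P_t^{\mathscr D}f(x)=P_sg(x)-\mathbb E_x\big[\mathbf 1_{\sigma_{\mathscr D}\le s}\,(P_{s-\sigma_{\mathscr D}}g)(X_{\sigma_{\mathscr D}})\big].
$$
By \textbf{(C1)} the first term $P_sg$ is continuous on $\mathscr S$, while the second (error) term is bounded in modulus by $\|f\|\,\mathbb P_x[\sigma_{\mathscr D}\le s]$.

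The heart of the argument is to control the error term uniformly in $x$ near $x_0$, for $s$ small. For $x\in B(x_0,\delta)$ one has $B(x,\delta)\subset B(x_0,2\delta)\subset\mathscr D$, hence $\mathscr D^c\subset B(x,\delta)^c$, and therefore $\sigma_{B(x,\delta)}\le\sigma_{\mathscr D}$ (the infimum defining $\sigma_{B(x,\delta)}$ runs over a larger set of times). Consequently $\mathbb P_x[\sigma_{\mathscr D}\le s]\le\mathbb P_x[\sigma_{B(x,\delta)}\le s]$, and Lemma \ref{pr.C5} applied with the compact set $K=\overline{B(x_0,\delta)}$ gives $\sup_{x\in\overline{B(x_0,\delta)}}\mathbb P_x[\sigma_{B(x,\delta)}\le s]\to 0$ as $s\to 0^+$. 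Given $\varepsilon>0$, I would first fix $s\in(0,t)$ so small that $\|f\|\sup_{x\in\overline{B(x_0,\delta)}}\mathbb P_x[\sigma_{B(x,\delta)}\le s]<\varepsilon/3$, and then — for that now-fixed $s$ — use the continuity of $P_sg$ to find a neighborhood of $x_0$ on which $|P_sg(x)-P_sg(x_0)|<\varepsilon/3$. Combining the two bounds through the displayed identity yields $|P_t^{\mathscr D}f(x)-P_t^{\mathscr D}f(x_0)|<\varepsilon$ on that neighborhood, which is the desired continuity.

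The only delicate points — and the closest thing to an obstacle — are bookkeeping ones: checking that $g$ is Borel and justifying the displayed decomposition, which rests on $g\equiv 0$ on $\mathscr D^c$ and on the strong Markov property at the stopping time $\sigma_{\mathscr D}$, so that the event where $X_{\sigma_{\mathscr D}}$ lands in the interior of $\mathscr D^c$ — possible here since the process has jumps — contributes nothing; and the comparison $\sigma_{B(x,\delta)}\le\sigma_{\mathscr D}$. Everything else follows the template of the proof of \cite[Theorem~2.2]{chung2001brownian}, with \textbf{(C1)} supplying the continuity of the non-killed semigroup and Lemma \ref{pr.C5} supplying the equicontinuity of small exit times that makes the error term uniformly negligible.
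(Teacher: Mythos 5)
Your proof is correct and is exactly the argument the paper has in mind: it cites \textbf{(C1)}, Lemma~\ref{pr.C5}, and the Chung--Zhao scheme from \cite[Theorem~2.2]{chung2001brownian}, and your decomposition of $P_sg$ via the strong Markov property at $\sigma_{\mathscr D}$, together with the uniform smallness of $\mathbb P_x[\sigma_{\mathscr D}\le s]$ obtained by dominating it with $\mathbb P_x[\sigma_{B(x,\delta)}\le s]$ and invoking the lemma, is precisely that scheme. No gaps.
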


 In conclusion, 
we have thus proved that, when  $\beta>1$ and   \eqref{eq.Uh} holds, the process $(X_t,t\ge 0)$ solution to \eqref{eq.Levy} satisfies \textbf{(C1)}  $\to$ \textbf{(C5)} with the Lyapunov function defined in \eqref{eq.LW-S}. Hence, we have the following result.

\begin{thm}\label{th.LL}
Let $\beta>1$ and assume \eqref{eq.Uh}. Let $\mathscr D$ be any subdomain of $\mathbb R^d$ such that $\mathbb R^d\setminus \overline{\mathscr D}$ is nonempty.
Then, the empirical distribution of the process solution to \eqref{eq.Levy} (see Corollary \ref{co.Ex}) satisfies all the assertions of Theorem~\ref{thm-main1},  Theorem~\ref{thm-main2}  and Corollary~\ref{co1}   with the Lyapunov function defined in~\eqref{eq.LW-S}.
\end{thm}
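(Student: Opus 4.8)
The plan is to deduce Theorem~\ref{th.LL} from the abstract results of Section~\ref{sec.MR}: one checks that the solution of \eqref{eq.Levy} fits the standing framework of Section~\ref{sec.11} together with \textbf{(C1)}$\to$\textbf{(C5)}, using the Lyapunov function $\mathbf W$ of \eqref{eq.LW-S}, and then simply invokes Theorem~\ref{thm-main1}, Theorem~\ref{thm-main2} and Corollary~\ref{co1}. Since every one of these conditions is of a purely probabilistic nature (strong Feller regularity, a Lyapunov drift, topological irreducibility of the killed and non-killed semigroups, no instantaneous exit), the whole argument amounts to assembling the propositions proved earlier in this section. First I would recall that Corollary~\ref{co.Ex} gives, under \eqref{eq.Uh}, a unique pathwise solution $(X_t,t\ge 0)$ of \eqref{eq.Levy} which is a c\`adl\`ag strong Markov process valued in the Polish space $\mathbb R^d$, so the standing hypotheses of Section~\ref{sec.11} hold; that corollary also records $\mathbf W\in\mathbb D_e(\mathfrak L^X)$, which is the regularity needed to even state \textbf{(C3)}.

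Then I would go down the checklist. \textbf{(C1)} is exactly Proposition~\ref{pr.SF} ($P_t$ strong Feller for every $t>0$). \textbf{(C2)} is Proposition~\ref{pr.TF} (weak continuity of $x\mapsto\mathbb P_x(X_{[0,T]}\in\cdot)$ on path space). \textbf{(C3)} is Proposition~\ref{pr.C3}, where the sequences $(r_n),(b_n)$ and the exhausting compacts $(K_n)$ are read off from the estimate $\mathscr L^X\mathbf V/\mathbf V\to-\infty$ as $|x|\to\infty$ together with the at-most-polynomial bound $|\mathscr L^X\mathbf V(x)|\le m(1+|x|^{\beta\theta+2\beta-2})$, and the passage from $\mathbf V$ to $\mathbf W=\mathbf V^{1/p}$ is the one used throughout the paper (via \cite[Proposition~5.1]{guillinqsd}). \textbf{(C4)} follows from Corollary~\ref{co.SFD} (strong Feller property of $P_t^{\mathscr D}$ on the open set $\mathscr D$), taking for $\mathcal C$ any measure-separable class of bounded continuous functions on $\mathscr D$. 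Finally \textbf{(C5)} is Proposition~\ref{pr.Topo}: its first part yields $P_T^{\mathscr D}(x,O)>0$ for all $T>0$, all $x\in\mathscr D$ and all non-empty open $O\subset\mathscr D$ (hence a fortiori the uniform-in-$t\ge t_0$ statement required by \textbf{(C5)}), and its second part, which is where the hypothesis $\mathbb R^d\setminus\overline{\mathscr D}\neq\emptyset$ enters, gives $\mathbb P_y[\sigma_{\mathscr D}<+\infty]>0$ for all $y\in\mathscr D$, in particular the existence of the point $x_0$ demanded in \textbf{(C5)}.

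With \textbf{(C1)}$\to$\textbf{(C5)} in force and $\mathbf W$ as in \eqref{eq.LW-S}, Theorem~\ref{thm-main1} applies for every bounded potential $V$ and produces the triple $(\Lambda_{\mathscr D}(V),\mu_{\mathscr D,V},\varphi_{\mathscr D,V})$ together with the spectral gap \eqref{thm-maina} of $P_t^{\mathscr D,V}$ on $b_{\mathbf W}\mathcal B(\mathscr D)$; Theorem~\ref{thm-main2} then gives, for every $\nu\in\mathcal P_{\mathbf W}(\mathscr D)$, the $\tau$-L.D.P. of $\mathbb P_\nu[L_T\in\cdot\,|\,T<\sigma_{\mathscr D}]$ with good rate function $I_{\mathscr D}$ of \eqref{thm-main2a}, vanishing only at the q.e.d.\ $\pi_{\mathscr D}=\varphi_{\mathscr D}\mu_{\mathscr D}$; and Corollary~\ref{co1} yields the exponential concentration of $L_T$ near $\pi_{\mathscr D}$. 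This is precisely the content of Theorem~\ref{th.LL}, so the proof closes by a one-line citation of the three statements.

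I expect the only genuinely delicate point — and the one that has already absorbed most of the work of this section — to be the topological-irreducibility half of \textbf{(C5)}, i.e.\ the strict positivity $P_T^{\mathscr D}(x,O)>0$: unlike the Lyapunov condition \textbf{(C3)} it cannot be obtained by a soft drift estimate, and it relies on a support theorem for $\alpha$-stable driven S.D.E.s, realised in Proposition~\ref{pr.Topo} through an explicit control construction (a piecewise deterministic curve with finitely many admissible jumps steered from $x$ to a small ball around $z$ while staying in a bounded subdomain $\mathscr O\dsubset\mathscr D$) together with \cite[Theorem~2.1]{kulyk22}. A secondary technical point worth flagging is the verification that $\mathbf W=\mathbf V^{1/p}$ truly belongs to the extended domain $\mathbb D_e(\mathfrak L^X)$; this is dispatched by the It\^o-formula and localisation argument given in the proof of Corollary~\ref{co.Ex} (martingale property of $M^{\mathbf V}_{t\wedge\sigma_R}$, non-explosiveness \eqref{eq.tauR}) combined with \cite[Proposition~5.1]{guillinqsd}.
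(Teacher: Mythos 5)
Your proof matches the paper's argument exactly: the paper does not give a separate proof of Theorem~\ref{th.LL} but simply observes that Propositions~\ref{pr.C3}, \ref{pr.SF}, \ref{pr.TF}, \ref{pr.Topo} and Corollaries~\ref{co.Ex}, \ref{co.SFD} together verify \textbf{(C1)}$\to$\textbf{(C5)} with the Lyapunov function \eqref{eq.LW-S}, so that Theorems~\ref{thm-main1} and \ref{thm-main2} and Corollary~\ref{co1} apply directly. Your checklist, including the role of $\mathbb R^d\setminus\overline{\mathscr D}\neq\emptyset$ in the second half of \textbf{(C5)} and the membership $\mathbf W\in\mathbb D_e(\mathfrak L^X)$ from Corollary~\ref{co.Ex}, is the same route.
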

\medskip

\noindent
\textbf{Note}. When $\mathscr D$ is bounded, one can modify  $U$ outside $\overline{\mathscr D}$ so that it satisfies  \eqref{eq.Uh} and then, as $\mathbf W$ is bounded over $\overline{\mathscr D}$ (see \eqref{eq.LW-S}),  all the assertions  of Theorem~\ref{thm-main1},  Theorem~\ref{thm-main2}  and Corollary~\ref{co1} hold on the whole space $b\mathcal B(\mathscr D)$ and for all $\nu \in \mathcal P(\mathscr D)$.  
\medskip

\noindent
\textbf{Note}. Notice that the fact that $\mathscr D$ is connected is not necessary to get the result of Proposition~\ref{pr.Topo} (this is the main difference with solutions to SDE driven by a  Brownian motion), and thus Theorem \ref{th.LL}   holds   when e.g.  $\mathscr D$ is a finite union of disjoint subdomains of $\mathbb R^d$ such that $\mathbb R^d\setminus \overline{\mathscr D}$ is nonempty.

 \medskip

\noindent
 \textbf{Acknowledgement.}\\
 {\small A. Guillin is supported by the ANR-23-CE-40003, Conviviality, and has benefited from a government grant managed by the Agence Nationale de la
Recherche under the France 2030 investment plan ANR-23-EXMA-0001. 
B.N. is  supported by  the grant  IA20Nectoux from the Projet I-SITE Clermont CAP 20-25 and   by  the ANR-19-CE40-0010, Analyse Quantitative de Processus M\'etastables (QuAMProcs).}

{\small
 \bibliography{GrandeD} 

\bibliographystyle{plain}

}

%
%
%
%
%
%
%
%
%
%
%
%
%
%
%
%
%

\end{document}